	\newtheorem{teo}{Теорема}
	\newtheorem{lem}{Лемма}
	\newtheorem{dfn}{Определение}
	\newtheorem{exampl}{Пример}
	\newtheorem{rem}{Замечание}
\begin{document}
\begin{center}		
	{\bf Решение обыкновенного дифференциального уравнения с дробной степенью оператора Бесселя} \\
	{\it Ситник С.М., Шишкина Э.Л.}
\end{center}

		\section{Введение}

		В этой статье рассмотрим дифференциальное уравнение с дробной степенью оператора Бесселя, где оператор Бесселя имеет вид
		\begin{equation}\label{Bess}
			B_\gamma= \frac{d^2}{dx^2}+\frac{\gamma}{x}\frac{d}{dx},\qquad \gamma\geq 0.
		\end{equation}
	Первые явные формулы для дробных степеней оператора Бесселя на отрезке в терминах гипергеометрических функций Гаусса приведены в статье Иды Шпринхайзен-Купер \cite{Ida}.
Более подробная теория дробных степеней \eqref{Bess} на отрезке и полуоси содержится в 
 \cite{Sita1,SitSh3,SitSh1,SitSh2,SitSh4}.
Дробные степени гипер-бесселевого оператора вида
		$$
		\mathbf{B}_{\alpha_0,\alpha_1,...,\alpha_m}=x^{\alpha_0}\frac{d}{dx}x^{\alpha_1}\frac{d}{dx}...x^{\alpha_{m-1}}\frac{d}{dx}x^{\alpha_m}
		$$
	с вещественными параметрами $\alpha_0,...,\alpha_m$ были представлены Адамом Макбрайдом в \cite{McBArt}. Их изучение было продолжено в \cite{Dim66,Dim68,DimKir,Kir1}.
	Оператор Бесселя   \eqref{Bess} соответствует  $\mathbf{B}_{\alpha_0,\alpha_1,...,\alpha_m}$ при
		$$
		m=2,\, \alpha_0=-1,\, \alpha_1=2-\gamma,\, \alpha_2=\gamma-1,
		$$
	или 
		$$
		m=2,\, \alpha_0=-\gamma,\, \alpha_1=\gamma,\, \alpha_2=0.
		$$

		Уравнения с дробными производными Бесселя ранее не изучались из-за отсутствия подходящих инструментов для их изучения.
		Первой целью статьи является представление одного из таких инструментов, а именно интегрального преобразования Мейера. Такое преобразование играет ту же роль для левосторонней дробной производной Бесселя на полуоси, что и преобразование Лапласа для левой дробной производной Герасимова-Капуто на полуоси. Другая цель состоит в том, чтобы показать, что степенные функции, умноженные на функции Фокса-Райта, являются фундаментальной системой решений левосторонней дробной производной Бесселя типа Герасимова-Капуто на полуоси.
		Уравнения с дробными производными Бесселя чрезвычайно интересны с теоретической точки зрения, но возникают и в приложениях, например, в задачах случайного блуждания частицы
		\cite{Garra1,Garra2}.

		В \cite{Kilbas}, на стр. 312 метод преобразования Лапласа был применен для получения явного решения однородного уравнения вида
		\begin{equation}\label{EqCap}
			(\,^{GC} D^\alpha_{0+}f)(x)=\lambda f(x),\qquad x>0,\qquad l-1<\alpha\leq l,\qquad l\in\mathbb{N},\qquad \lambda\in\mathbb{R},
		\end{equation}
		где для нецелого $\alpha>0$
		\begin{equation}\label{Cap}
			(\,^{GC} D^\alpha_{0+}f)(x)=\frac{1}{\Gamma(n-\alpha)}\int\limits_0^x\frac{f^{(n)}(t)dt}{(x-t)^{\alpha+1-n}},\qquad x\in[0,\infty)
		\end{equation}
	--- левосторонняя дробная производная Герасимова-Капуто на полуоси	 (\cite{Gerasimov}, \cite{Kilbas}, стр. 97, формула 2.4.47).
Для $\alpha=n=0,1,2,...$
		$$
		(\,^{GC} D^n_{0+}f)(x)=f^{(n)}(x).
		$$
		Герасимов в \cite{Gerasimov} вывел и решил уравнения с частными производными дробного порядка с производной \eqref{Cap} для прикладных задач механики в 1948 году.

		Условия вида
		\begin{equation}\label{ConCap}
			f^{k}(0+)=d_k,\qquad k=0,1,...,l-1,\qquad d_k\in\mathbb{R}
		\end{equation}
		могут быть добавлены к уравнению \eqref{EqCap}.
	Решение задачи  \eqref{EqCap}--\eqref{ConCap} имеет вид (см. \cite{Kilbas}, стр. 312)
		\begin{equation}\label{SolCap}
			f(x)=\sum\limits_{k=0}^{l-1}d_k\,x^k\,E_{\alpha,k+1}(\lambda x^\alpha),
		\end{equation}
		где $E_{\alpha,\beta}$ --- функция Миттаг--Леффлера \eqref{ML}.
		
		В этой статье мы, при помощи преобразования Мейера, получим точное решение  $f$
		однородного уравнения вида
		$$
		(\mathcal{B}_{\gamma,0+}^{\alpha} f)(x)=\lambda f(x),
		$$
		где положительная вещественная степень оператора \eqref{Bess} определена формулой \eqref{DrobessDer2}.

		\section{Основные определения}
	
		\subsection{Специальные функции}
	
	Приведем определения специальных функций, которые будем использовать.
	
	\textbf{Модифицированные функции Бесселя} (или гиперболические функции Бесселя) \textbf{первого и второго рода} $I_{\alpha }(x)$ и $K_{\alpha }(x)$  определяются формулами (см. \cite{Watson,Bowman,Kreh,Luke}) 
	\begin{equation}\label{Mod1}
		I_{\alpha }(x)=i^{-\alpha }J_{\alpha }(ix)=\sum _{m=0}^{\infty }{\frac {1}{m!\,\Gamma (m+\alpha +1)}}\left({\frac {x}{2}}\right)^{2m+\alpha },
	\end{equation}
	\begin{equation}\label{Mod2}
		K_{\alpha }(x)={\frac {\pi }{2}}{\frac {I_{-\alpha }(x)-I_{\alpha }(x)}{\sin(\alpha \pi )}},
	\end{equation}
	где $\alpha$ --- нецелое. Для целого $\alpha$ используется предельный переход в \eqref{Mod1},\eqref{Mod2}.
	Очевидно, что  $K_{\alpha }(x)=K_{-\alpha }(x)$.
	Для мальньких значений аргумента $0 < |r|\ll\sqrt{\nu+1}$, имеем
	\begin{equation}\label{AssimK}
		K_{\nu }(r)\sim {\begin{cases}-\ln \left({\dfrac {r}{2}}\right)-\vartheta \,\,\, {\text{if }}\nu =0,\\\frac {\Gamma (\nu )}{2^{1-\nu}}r^{-\nu}\,\,\, {\text{if }}\nu >0,\end{cases}}
	\end{equation}
	где
	$$
	\vartheta=\lim _{n\to \infty }\left(-\ln n+\sum _{k=1}^{n}{\frac {1}{k}}\right)=\int\limits_{1}^{\infty }\left(-{\frac {1}{x}}+{\frac {1}{\lfloor x\rfloor }}\right)\,dx
	$$ --- постоянная Эйлера-Маскерони \cite{abramovic}.
	
	Ядром преобразования Мейера является 
	 \textbf{нормированная модифицированная функция Бесселя второго рода}  $k_\nu$, определенная формулой 
	\begin{equation}\label{FBess3}
		k_\nu(x) =\frac{2^\nu\Gamma(\nu+1)}{x^\nu}\,\,K_\nu(x),
	\end{equation}
	где  $K_\nu$ --- модифицированная функция Бесселя второго рода  \eqref{Mod2}.

	Нормированная модифицированная функция Бесселя второго рода обладает свойствами
	\begin{equation}\label{KBessAsO2}
		\lim\limits_{x\rightarrow 0}k_\nu(x)=\frac{\Gamma(-\nu)}{2^{2\nu+1}\Gamma(1+\nu)},\qquad \nu<0,\qquad -\nu\notin\mathbb{N},
	\end{equation}
	\begin{equation}\label{KBessAsO3}
		\lim\limits_{x\rightarrow 0}x^\alpha k_0(x)=0,\qquad \alpha>0,\qquad 	\lim\limits_{x\rightarrow 0}\frac{1}{\ln{x}} k_0(x)=-1,
	\end{equation}
	\begin{equation}\label{KBessAsO1}
		\lim\limits_{x\rightarrow 0}x^{2\nu}k_\nu(x)=\frac{1}{2\nu},\qquad \nu>0,
	\end{equation}
	\begin{equation}\label{KBessAsDer}
		\lim\limits_{x\rightarrow 0}x^{2\nu+1}\frac{dk_\nu(x)}{dx}=-1,\qquad \nu>-1.
	\end{equation}

	Ядром левосторонней дробной производной Бесселя на полуоси является \textbf{гипергеометрическая функция Гаусса}, которая  внутри круга $ | z | {<} 1 $, определяется как сумма гипергеометрического ряда (см. \cite{abramovic}, стр. 373, формула 15.3.1)
	\begin{equation}\label{FG}
		\,_2F_1(a,b;c;z)=F(a,b,c;z)=\sum\limits_{k=0}^\infty\frac{(a)_k(b)_k}{(c)_k}\frac{z^k}{k!},
	\end{equation}
а для $ |z|\geq 1 $ получается аналитическим продолжением этого ряда.
	В (\ref{FG}) параметры $a,b,c$ и переменная  $z$ могут быть комплексными, $c{\neq}0,{-}1,{-}2,{\dots}$. Множитель
	$(a)_k$
	--- это символ Похгаммера:
	  $(z)_n=z(z+1)...(z+n-1),$ $n=1,2,...,$ $(z)_0\equiv 1$.
	
	 \textbf{Функция Миттаг--Леффлёра} $E_{\alpha,\beta}(z)$ --- это целая функция порядка $1/\alpha$
	 определяется следующим рядом, в случае когда вещественная часть $ \alpha $ строго положительна
	\begin{equation}\label{ML}
		E_{\alpha, \beta}(z)=\sum_{n=0}^{\infty}\frac{z^n}{\Gamma(\alpha n+\beta)},\;\; z\in\mathbb{C},\;\alpha, \beta\in\mathbb{C},\;	 {\rm Re}\,\alpha>0,\; {\rm Re}\,\beta>0.
	\end{equation}
Функция \eqref{ML} была введена Гестой Миттаг--Леффлёром в 1903 году для $ \alpha {=} 1 $ и А.~Виманом в 1905 году в общем случае.
Первыми приложениями этих функций Миттаг--Леффлёра и Вимана были приложения в комплексном анализе (нетривиальные примеры целых функций с нецелыми порядками роста и обобщенные методы суммирования). В СССР эти функции стали в основном известны после публикации знаменитой монографии М. М. Джрбашяна \cite{Dzh} (см. также его более позднюю монографию \cite{Djr2}). 
Наиболее известным применением функций Миттаг--Леффлёра в теории интегро-дифференциальных уравнений и дробного исчисления является тот факт, что  резольвента дробного интеграла Римана--Лиувилля явно выражается через них в соответствии со знаменитой  формулой Хилле--Тамаркина--Джрбашяна 
\cite{SKM}, стр. 78.
Ввиду многочисленных приложений к решению дифференциальных уравнений с дробными производными эта функция была заслуженно названа
в \cite{GoMa} была заслуженно названа "\textit{Королевской функцией дробного исчисления}".

	 \textbf{Функция Фокса-Райта} $_p\Psi_q(z)$ для $z\in \mathbb{C}$, $a_l,b_j\in\mathbb{C}$,
	$\alpha_l,\beta_j\in\mathbb{R}$, $l=1,...,p;$ $j=1,...,q$  определяется рядом вида (см. \cite{Fox,Wright})
	\begin{equation}\label{Wright}
		\,_p\Psi_q(z)=\,_p\Psi_q\left[\left.
		\begin{array}{c}
			$$(a_l,\alpha_l)_{1,p}$$ \\
			$$(b_j,\beta_j)_{1,q}$$ \\
		\end{array}
		\right|z\right]=
		\sum\limits_{k=0}^\infty \frac{\prod\limits_{l=1}^p\Gamma(a_l+\alpha_l k)}{\prod\limits_{j=1}^q\Gamma(b_j+\beta_j k)}
		\frac{z^k}{k!}.
	\end{equation}
При условии
	$$
	\sum\limits_{j=1}^q\beta_j-\sum\limits_{l=1}^p\alpha_l>-1
	$$
ряд в \eqref{Wright} сходится для всех $z\in\mathbb{C}$.
Пусть
	$$
	\delta=\prod\limits_{l=1}^p|\alpha_l|^{-\alpha_l}\prod\limits_{j=1}^q|\beta_j|^{\beta_j},
	$$
	$$
	\mu=\sum\limits_{j=1}^q b_j-\sum\limits_{l=1}^pa_l+\frac{p-q}{2}.
	$$
	Если
	$$
	\sum\limits_{j=1}^q\beta_j-\sum\limits_{l=1}^p\alpha_l=-1,
	$$
то ряд	в \eqref{Wright} сходится абсолютно для $|z|<\delta$ и для $|z|=\delta$, когда  ${\rm Re}\,\mu>\frac{1}{2}$.
		Функция Фокса-Райта  для дробных степеней оператора Бесселя играет ту же роль, что функция Миттаг-Леффлера для обыкновенного дробного исчисления.

	Используя функцию Фокса-Райта \eqref{Wright}, мы можем записать функцию Миттаг--Леффлёра в виде
	\begin{equation}\label{FWML}
		E_{\alpha, \beta}(z)=\,_1\Psi_1\left[\left.
		\begin{array}{c}
			$$(1,1)$$ \\
			$$(\beta,\alpha)$$ \\
		\end{array}
		\right|z\right].
	\end{equation}

	\subsection{Интегральные преобразования и оператор Пуассона}

	В этом пункте мы представляем интегральные преобразования Лапласа и Мейера и их связь с оператором преобразования  Пуассона.
	
	 \textbf{Преобразование Лапласа} функции $f(t)$, определенной для всех вещественных чисел  $t>0$, --- это функция $F(s)$, представимая равенством
	\begin{equation}\label{Lap}
		\mathcal{L}[f](s)=F(s)=\int\limits_{0}^{\infty }f(t)e^{-st}\,dt,
	\end{equation}
где  $s$ --- комплексное число
	$ s=\sigma +i\omega$, $\sigma$ и $\omega$ --- вещественные.
	
	Пусть $\mathscr{E}_a$, $a\in\mathbb{R}$ --- пространство функций $f:\mathbb{R}\rightarrow\mathbb{C}$, $f\in L_1^{loc}(\mathbb{R})$, таких, что 
	$\int\limits_0^\infty |f(t)|e^{-at}dt<\infty$ и $f(t)$ обращается в нуль, если  $t<0$.
	
	Пусть $f\in \mathscr{E}_a$. Тогда интеграл Лапласа \eqref{Lap} сходится абсолютно и равномерно на $\bar{H}_a = \{p : p\in\mathbb{C}, {\rm Re}\,p\geq a\}$.
Преобразование Лапласа функции  $f\in \mathscr{E}_a$ ограничено на $\bar{H}_a$
и является аналитической функцией на ${H}_a = \{p : p\in\mathbb{C}, {\rm Re}\,p> a\}$ (см. \cite{Glaeske}, стр. 28).
	
	Пусть  $f\in \mathscr{E}_a$ и гладкая на каждом интервале  $(a, b)\in\mathbb{R}_+$.  Тогда в точках  $t$
непрерывности этой функции определено обратное преобразование Лапласа:
	$$
	\mathcal{L}^{-1}[F](t)=f(t)=\frac{1}{2\pi i}\int\limits_{c-i\infty}^{c+i\infty}F(s)e^{ts}ds,\qquad c>a.
	$$
(см. \cite{Glaeske}, стр. 37).

	Преобразование Лапласа функции Миттаг--Леффлера, умноженной на степенную функцию имеет вид (см. \cite{Kilbas}, стр. 47, формула 1.9.13, где $\rho=1$) 
	\begin{equation}\label{LapOfML}
		\mathcal{L} [x^{\beta-1}E_{\alpha,\beta}(\lambda x^\alpha)](s)=\frac{s^{\alpha-\beta}}{s^\alpha-\lambda}.
	\end{equation}

Для функции $f$ интегральное преобразование, содержащее функцию $k_{\frac{\gamma-1}{2}}$, $\gamma\geq 1$ в качестве ядра называется  \textbf{преобразованием Мейера}. Оно определяется формулой
	\begin{equation}\label{MeT}
		\mathcal{K}_\gamma[f](\xi)=F(\xi)=\int\limits_{0}^{\infty} {k}_{\frac{\gamma-1}{2}} (x\xi)\,
		f(x)x^\gamma\,dx.
	\end{equation}
	
	Преобразование \eqref{MeT} --- это модификация  $K$--преобразования из \cite{Glaeske}, стр. 93, формула 1.8.48 и поэтому имеет те же свойства, но другое асимптотическое поведение.

	
	Пусть $\beta$ такое число, что
	$$
	\beta>\frac{\gamma}{2}-2\, {\text{если}}\,\gamma>1\, {\text и}\,\beta>-1 \,{\text{если}}\,\gamma= 1\, {\text и}\,\ \beta>-1-\frac{\gamma}{2}  \,{\text{если}}\, 0<\gamma<1.
	$$
	
	Определим класс функций
	$$
	\mathscr{M}_{\gamma}^a(\mathbb{R}_+)=\biggl\{f\in L^{loc}_1(\mathbb{R}_+):f(t) = o\left(t^{\beta-\frac{\gamma}{2}}\right)\,\text{при}\, t\rightarrow+0\,\text{и}\,f(t) = O(e^{at})\,\text{при}\,t\rightarrow+\infty\biggr\}.
	$$
Преобразование Мейера функции $f\in \mathscr{M}_{\gamma}^a(\mathbb{R}_+)$ существует почти всюду для  ${\rm Re}\,\xi > a$ (см. \cite{Glaeske}, стр. 94).

	Если $0<\gamma<2$ и $F(\xi)$ --- аналитическая на
полуплоскости $H_a=\{p\in\mathbb{C}:{\rm Re}\,p\geq a\}$, $a\leq 0$ и
 $s^{\frac{\gamma}{2}-1}F(\xi)\rightarrow 0$, $|\xi|\rightarrow+\infty$ равномерно по ${\rm arg}\,s$, то для любого числа $c$, $c > a$
	обратное $\mathcal{K}_\gamma^{-1}$ имеет вид (см. \cite{Glaeske}, стр. 94)
	\begin{equation}\label{InvMeT}
		\mathcal{K}_\gamma^{-1}[\widehat{f}](x)=f(x)=\frac{1}{\pi i}\int\limits_{c-i\infty}^{c+i\infty} \widehat{f}(\xi) i_{\frac{\gamma-1}{2}}(x\xi) \xi^\gamma d\xi.
	\end{equation}
	
Формула обращения \eqref{InvMeT} 
не удобна для расчетов и имеет условие $ 0{<}\gamma{<}2 $. 
Здесь мы представим другую формулу обращения с использованием  оператора преобразования Пуассона.

Пусть $\gamma{>}0$.	Одномерный оператор Пуассона определен для интегрируемых функций  $f$ равенством
	\begin{equation}
		\label{154}
		\mathcal{P}_x^{\gamma}f(x)=\frac{2C(\gamma)}{x^{\gamma-1}}
		\int\limits_0^x \left( x^2-t^2\right)^{\frac{\gamma}{2}-1}f(t)\,dt,\qquad   C(\gamma)=	\frac{\Gamma\left(\frac{\gamma+1}{2}\right)}{\sqrt{\pi}\,\Gamma\left(\frac{\gamma}{2}\right)}.
	\end{equation}
Постоянная $C(\gamma)$ выбрана так, чтобы  $\mathcal{P}_x^\gamma[1]=1$ (см. \cite{BookSSh}, стр. 50).
	
	Левый обратный оператор для \eqref{154} при $\gamma>0$ для  функции  $H(x)$ определяется формулой (см. \cite{SKM})
	\begin{equation}\label{ObrPuass}
		(\mathcal{P}^\gamma_x)^{-1} H(x)=\frac{2\sqrt{\pi}x}{\Gamma\left(\frac{\gamma+1}{2}\right)\Gamma \left( n-\frac{\gamma}{2} \right) } \left(\frac{d}{2xdx} \right)^{n} \int\limits_{0}^{x} H(z)  (x^2-z^2)^{n-\frac{\gamma}{2}-1}z^{\gamma} dz,
	\end{equation}
	где $
	n=\left[\frac{\gamma}{2}\right]+1.
	$

	Для того чтобы найти
	$f(x)$ из равенства
	$$
	\mathcal{K}_\gamma[f](\xi)=(\mathcal{L}F(z))(\xi)=g(\xi)
	$$
	представим ядро преобразования \eqref{MeT} по формуле
	$$
	K_\alpha(x\xi)=\frac{\Gamma\left(\frac{1}{2} \right) }{\Gamma\left(\alpha+\frac{1}{2} \right)}\left(\frac{x\xi}{2} \right)^\alpha
	\int\limits_{1}^\infty e^{-x\xi t}(t^2-1)^{\alpha-\frac{1}{2}}dt=\{x t=z\}=
	$$
	$$
	=\frac{\sqrt{\pi} }{\Gamma\left(\alpha+\frac{1}{2} \right)}\left(\frac{\xi}{2x} \right)^\alpha
	\int\limits_{x}^\infty e^{-\xi z}(z^2-x^2)^{\alpha-\frac{1}{2}}dz
	$$
	из \cite{Watson},	стр. 190, формула (4). Тогда
	$$
	{k}_{\frac{\gamma-1}{2}} (x\xi)=\frac{2^{\frac{1-\gamma}{2}} }{\Gamma\left( \frac{\gamma+1}{2}\right)(x\xi)^{\frac{\gamma-1}{2}}}K_{\frac{\gamma-1}{2}}(x\xi)=
	$$
	$$
	=\frac{2^{\frac{1-\gamma}{2}} }{\Gamma\left( \frac{\gamma+1}{2}\right)(x\xi)^{\frac{\gamma-1}{2}}}\frac{\sqrt{\pi} }{\Gamma\left(\frac{\gamma}{2} \right)}\left(\frac{\xi}{2x} \right)^{\frac{\gamma-1}{2}}
	\int\limits_{x}^\infty e^{-\xi z}(z^2-x^2)^{\frac{\gamma}{2}-1}dz=
	$$
	$$
	=\frac{2^{1-\gamma}\sqrt{\pi} }{x^{\gamma-1}\Gamma\left( \frac{\gamma+1}{2}\right)\Gamma\left(\frac{\gamma}{2} \right)}
	\int\limits_{x}^\infty e^{-\xi z}(z^2-x^2)^{\frac{\gamma}{2}-1}dz.
	$$
	Следовательно,
	$$
	\mathcal{K}_\gamma[f](\xi)=\widehat{f}(\xi)=\int\limits_{0}^{\infty} {k}_{\frac{\gamma-1}{2}} (x\xi)\,
	f(x)x^\gamma\,dx=
	$$
	$$
	=\frac{2^{1-\gamma}\sqrt{\pi} }{\Gamma\left( \frac{\gamma+1}{2}\right)\Gamma\left(\frac{\gamma}{2} \right)}\int\limits_{0}^{\infty}
	f(x)x\,dx\int\limits_{x}^\infty e^{-\xi z}(z^2-x^2)^{\frac{\gamma}{2}-1}dz=
	$$
	$$
	=\frac{2^{1-\gamma}\sqrt{\pi} }{\Gamma\left( \frac{\gamma+1}{2}\right)\Gamma\left(\frac{\gamma}{2} \right)}\int\limits_{0}^{\infty}
	e^{-\xi z}dz \int\limits_{0}^z
	f(x)(z^2-x^2)^{\frac{\gamma}{2}-1}xdx.
	$$
Используя оператор Пуассона \eqref{154} и преобразование Лапласа \eqref{Lap}, получим
	$$
	\mathcal{K}_\gamma[f](\xi)
	=\int\limits_{0}^{\infty}
	e^{-\xi z}F(z)dz=(\mathcal{L}F(z))(\xi),
	$$
где
	$$
	F(z)=A_\gamma z^{\gamma-1}\mathcal{P}_z^\gamma zf(z),\qquad A_\gamma=\frac{\pi}{2^\gamma \Gamma^2\left(\frac{\gamma+1}{2} \right) }.
	$$
Таким образом, для того чтобы найти  $f(x)$ из равенства
	$$
	\mathcal{K}_\gamma[f](\xi)=(\mathcal{L}A_\gamma z^{\gamma-1}\mathcal{P}_z^\gamma zf(z))(\xi)=g(\xi)
	$$
	мы должны сначала обратить преобразование Лапласа, а затем обратить оператор Пуассона. Формула обращения для функции $g$, такой что  $(\mathcal{L}^{-1}g)(x)$
существует, имеет вид    
	\begin{equation}\label{InvK}
		f(x)=\mathcal{K}_\gamma^{-1}[g](x)=\frac{1}{A_\gamma x}(\mathcal{P}_x^\gamma)^{-1}x^{1-\gamma}(\mathcal{L}^{-1}g)(x),\quad g=	\mathcal{K}_\gamma[f],\quad A_\gamma=\frac{\pi}{2^\gamma \Gamma^2\left(\frac{\gamma+1}{2} \right) }.
	\end{equation}

		\section{Левосторонние дробные интегралы и производные Бесселя на полуоси}
	
	\subsection{Определения левосторонних дробных интегралов и производных Бесселя на полуоси}

	Пусть $\alpha>0$, $\gamma>0$.	
	 \textbf{Левосторонний дробный интеграл Бесселя на полуоси} $B_{\gamma,0+}^{-\alpha}$  для $f{\in}L[0,\infty)$ определяется формулой
	$$
	(B_{\gamma,0+}^{-\alpha}f)(x)=(IB_{\gamma,0+}^{\alpha}\,f)(x)=
	$$
	\begin{equation}\label{Bess4}
		=\frac{1}{\Gamma(2\alpha)}\int\limits_0^x\left(\frac{y}{x}\right)^\gamma\left(\frac{x^2{-}y^2}{2x}\right)^{2\alpha-1}\,_2F_1\left(\alpha{+}\frac{\gamma{-}1}{2},\alpha;2\alpha;1{-}\frac{y^2}{x^2}\right)f(y)dy.
	\end{equation}
	Для $\alpha<0$ 	формула \eqref{Bess4} может быть продолжена аналитически, а$(B_{\gamma,0+}^{0}f)(x)=f(x)$.

	В \cite{McBArt} были представлены пространства, адаптированные для работы с операторами вида $B_{\gamma,0+}^\alpha$,  $\alpha\in\mathbb{R}$. Эти пространства имеют вид:
	$$
	F_p=\left\{\varphi\in C^\infty(0,\infty):x^k\frac{d^k\varphi}{dx^k}\in L^p(0,\infty)\,{\text{for}}\,k=0,1,2,...\right\},\qquad 1\leq p<\infty,
	$$
	$$
	F_\infty=\left\{\varphi\in C^\infty(0,\infty):x^k\frac{d^k\varphi}{dx^k}\rightarrow0 \, {\text{as}}\, x\rightarrow0+
	\,{\text{and\,as}}\, x\rightarrow\infty\,{\text{for}}\,k=0,1,2,...\right\}
	$$
и
	$$
	F_{p,\mu}=\left\{\varphi: x^{-\mu}\varphi(x)\in F_p\right\},\qquad 1\leq p\leq \infty,\qquad \mu\in\mathbb{C}.
	$$
	
	Мы приведем здесь теорему, которая является частным случаем теорем из
 \cite{McBArt}.
	\begin{teo}\label{Teo1} Пусть $\alpha{\in}\mathbb{R}$. Для всех $p,\mu$ и $\gamma>0$ таких, что $\mu{\neq}\frac{1}{p}{-}2m$, $\gamma{\neq}\frac{1}{p}{-}\mu{-}2m{+}1$, $m{=}1,2{...}$ оператор $B_{\gamma,0+}^{\alpha}$ является непрерывным линейным отображением из  $F_p,\mu$ в $F_{p,\mu-2\alpha}$. Если, кроме того,  $2\alpha\neq\mu-\frac{1}{p}+2m$ и $\gamma-2\alpha\neq\frac{1}{p}-\mu-2m+1$, $m=1,2...$,
	то $B_{\gamma,0+}^{\alpha}$ гомеоморфизм из $F_p,\mu$ на $F_{p,\mu-2\alpha}$ с обратным оператором $B_{\gamma,0+}^{-\alpha}$.
	\end{teo}

Сравним дробный интеграл Бесселя $B_{\gamma,0+}^{-\alpha}$ с известным дробным интегралом Римана-Лиувилля $I_{0+}^{2\alpha}$. Для этого положим $\gamma=0$:
	$$
	(B_{0,0+}^{-\alpha}f)(x){=}\frac{1}{\Gamma(2\alpha)}\int\limits_a^x\left(\frac{x^2-y^2}{2x}\right)^{2\alpha-1}
	\,_2F_1\left(\alpha-\frac{1}{2},\alpha;2\alpha;1-\frac{y^2}{x^2}\right)f(y)dy{=}
	$$
	$$
	=\frac{1}{\Gamma(2\alpha)}\int\limits_0^x\left(\frac{x^2-y^2}{2x}\right)^{2\alpha-1}\left[\frac{2x}{x+y}\right]^{2\alpha-1}f(y)dy=
	$$
	$$
	=\frac{1}{\Gamma(2\alpha)}\int\limits_0^x (x-y)^{2\alpha-1} f(y)dy=(I_{0+}^{2\alpha}f)(x).
	$$

Теперь выпишем явную формулу для дробной производной Бесселя $B_\gamma^\alpha$, $\alpha>0$. Для приложений лучше использовать обобщение дробной производной Герасимова--Капуто \eqref{Cap}.
	
	\begin{dfn}\label{def1}
	Пусть  $n{=}[\alpha]{+}1$, $f{\in}L[0,\infty)$, $IB_{\gamma,b-}^{n-\alpha}f,IB_{\gamma,b-}^{n-\alpha}f{\in} C^{2n}_{ev}(0,\infty)$.   \textbf{Левосторонняя дробная производная Бесселя на полуоси типа Герасимова-Капуто} определяется равенством
	\begin{equation}\label{DrobessDer2}
		(\mathcal{B}_{\gamma,0+}^\alpha f)(x)=(IB_{\gamma,0+}^{n-\alpha}B_\gamma^nf)(x).
	\end{equation}
	Легко видеть, что
	$$
	(\mathcal{B}_{0,0+}^{\alpha}f)(x){=}(\,^{C}D^{2\alpha}_{0+}f)(x),
	$$
где $(\,^{GC}D^{2\alpha}_{0+}f)(x)$ определено формулой \eqref{Cap}.
\end{dfn}

	Следуя \cite{Ida} и \cite{McBArt} приведем следующие результаты.
Пусть ${\rm Re}\,(2\eta+\mu)+2>1/p$, и $\varphi\in F_{p,\mu}$. Для ${\rm Re}\,\alpha>0$, мы определим $I_2^{\eta,\alpha}\varphi$ формулой
	\begin{equation}\label{210}
		I_2^{\eta,\alpha}\varphi(x)=\frac{2}{\Gamma(\alpha)}\, x^{-2\eta-2\alpha}\int\limits_0^x(x^2-u^2)^{\alpha-1}u^{2\eta+1}\varphi(u)du.
	\end{equation}
	
	Выражение $I_2^{\eta,\alpha}$ продолжается на значения ${\rm Re}\,\alpha\leq0$ по формуле
	\begin{equation}\label{211}
		I_2^{\eta,\alpha}\varphi=(\eta+\alpha+1)I_2^{\eta,\alpha+1}\varphi+\frac{1}{2}I_2^{\eta,\alpha+1}\,x\frac{d\varphi}{dx}.
	\end{equation}

	\begin{teo} Для \eqref{Bess4} справедлива следующая факторизация
		\begin{equation}\label{F0+}
			(B_{\gamma,0+}^{-\alpha}\varphi)(x)=\left(\frac{x}{2}\right)^{2\alpha}\,I_2^{\frac{\gamma-1}{2},\alpha}I_2^{0,\alpha}\varphi,
		\end{equation}
		где
		$$
		I_2^{0,\alpha}\varphi(x)=\frac{2}{\Gamma(\alpha)}\, x^{-2\alpha}\int\limits_0^x(x^2-u^2)^{\alpha-1}u\varphi(u)du,
		$$
		$$
		I_2^{\frac{\gamma-1}{2},\alpha}\varphi(x)=\frac{2}{\Gamma(\alpha)}\, x^{1-\gamma-2\alpha}\int\limits_0^x(x^2-u^2)^{\alpha-1}u^{\gamma}\varphi(u)du.
		$$
	\end{teo}
	\begin{proof}
	Имеем
		$$
		(B_{\gamma,0+}^{-\alpha}\varphi)(x)=
		$$
		$$=\frac{1}{\Gamma(2\alpha)}\int\limits_0^x\left(\frac{u}{x}\right)^\gamma
		\left(\frac{x^2-u^2}{2x}\right)^{2\alpha-1}\,_2F_1\left(\alpha+\frac{\gamma-1}{2},\alpha;2\alpha;1-\frac{u^2}{x^2}\right)\varphi(u)du=
		$$
		$$
		=2^{-2\alpha}x^{2\alpha}I_2^{\frac{\gamma-1}{2},\alpha}I_2^{0,\alpha}\varphi=
		$$
		$$
		=\frac{2^{1-2\alpha}x^{2\alpha}}{\Gamma(\alpha)}I_2^{\frac{\gamma-1}{2},\alpha}y^{-2\alpha}\int\limits_0^y(y^2-u^2)^{\alpha-1}u\varphi(u)du=
		$$
		$$
		=\frac{2^{2-2\alpha}x^{2\alpha}}{\Gamma^2(\alpha)}x^{-\gamma+1-2\alpha}
		\int\limits_0^x(x^2-y^2)^{\alpha-1}y^{\gamma-2\alpha}dy\int\limits_0^y(y^2-u^2)^{\alpha-1}u\varphi(u)du=
		$$
		$$
		=\frac{2^{2-2\alpha}}{\Gamma^2(\alpha)}x^{1-\gamma}
		\int\limits_0^xu\varphi(u)du \int\limits_u^x(y^2-u^2)^{\alpha-1}(x^2-y^2)^{\alpha-1}y^{\gamma-2\alpha}dy.
		$$
		
		Найдем
		$$
		\int\limits_u^x(y^2-u^2)^{\alpha-1}(x^2-y^2)^{\alpha-1}y^{\gamma-2\alpha}dy=\{y^2=t\}=
		\frac{1}{2}\int\limits_{u^2}^{x^2}(t-u^2)^{\alpha-1}(x^2-t)^{\alpha-1}t^{\frac{\gamma-1}{2}-\alpha}dt=
		$$
		$$
		=\frac{\sqrt{\pi }\Gamma (\alpha )}{2^{2 \alpha }\Gamma
			\left(\alpha +\frac{1}{2}\right)}\,  \left(x^2-u^2\right)^{2\alpha -1}\,u^{-2 \alpha +\gamma -1} \,_2F_1\left(\alpha+\frac{1-\gamma }{2},\alpha;2 \alpha ;1-\frac{x^2}{u^2}\right).
		$$
		
		Используя формулу (см. \cite{abramovic})
		$$
		\,_2F_1(a,b;c;z)=(1-z)^{-a}\,_2F_1\left(a,c-b;c;\frac{z}{z-1}\right),
		$$
получим
		$$
		\,_2F_1\left(\alpha+\frac{1-\gamma }{2},\alpha;2 \alpha ;1-\frac{x^2}{u^2}\right)=
		\,_2F_1\left(\alpha,\alpha+\frac{1-\gamma }{2};2 \alpha ;1-\frac{x^2}{u^2}\right)=
		$$
		$$
		=\left(\frac{x^2}{u^2}\right)^{-\alpha}\,_2F_1\left(\alpha,\alpha+\frac{\gamma-1}{2};2 \alpha ;1-\frac{u^2}{x^2}\right)=$$
		$$	=\left(\frac{x^2}{u^2}\right)^{-\alpha}\,_2F_1\left(\alpha+\frac{\gamma-1}{2},\alpha;2 \alpha ;1-\frac{u^2}{x^2}\right)
		$$
	и
		$$
		\int\limits_u^x(y^2-u^2)^{\alpha-1}(x^2-y^2)^{\alpha-1}y^{\gamma-2\alpha}dy=\frac{\sqrt{\pi }\Gamma (\alpha )}{2^{2 \alpha }\Gamma
			\left(\alpha +\frac{1}{2}\right)}\times
		$$
		$$
		\times  \left(x^2-u^2\right)^{2\alpha -1}\,u^{-2 \alpha +\gamma -1}
		\left(\frac{x^2}{u^2}\right)^{-\alpha}\,_2F_1\left(\alpha,\alpha+\frac{\gamma-1}{2};2 \alpha ;1-\frac{u^2}{x^2}\right)=
		$$
		$$
		=\frac{\sqrt{\pi }\Gamma (\alpha )}{2^{2 \alpha }\Gamma
			\left(\alpha +\frac{1}{2}\right)}\,  \left(x^2-u^2\right)^{2\alpha -1}\,u^{\gamma -1}x^{-2 \alpha}\,_2F_1\left(\alpha,\alpha+\frac{\gamma-1}{2};2 \alpha ;1-\frac{u^2}{x^2}\right).
		$$
Наконец,
		$$
		(B_{\gamma,0+}^{-\alpha}\varphi)(x)
		=\frac{2^{2(1-2\alpha)}\sqrt{\pi }}{\Gamma(\alpha)\Gamma
			\left(\alpha +\frac{1}{2}\right)}\,x^{1-\gamma-2\alpha}\times
		$$
		$$
		\times	\int\limits_0^x\,  \left(x^2-u^2\right)^{2\alpha -1}\,u^{\gamma}\,_2F_1\left(\alpha+\frac{\gamma-1}{2},\alpha;2 \alpha ;1-\frac{u^2}{x^2}\right)\varphi(u)du.
		$$
	Применяя формулу удвоения вида
		$$
		\Gamma(\alpha)\Gamma
		\left(\alpha +\frac{1}{2}\right)=2^{1-2\alpha}\sqrt{\pi}\Gamma(2\alpha),
		$$
	получим
		$$
		(B_{\gamma,0+}^{-\alpha}\varphi)(x)
		=\frac{2^{1-2\alpha}}{\Gamma(2\alpha)}\,x^{1-\gamma-2\alpha}\times
		$$
		$$
		\times
		\int\limits_0^x\,  \left(x^2-u^2\right)^{2\alpha -1}\,u^{\gamma}\,_2F_1\left(\alpha+\frac{\gamma-1}{2},\alpha;2 \alpha ;1-\frac{u^2}{x^2}\right)\varphi(u)du=
		$$
		$$
		=\frac{1}{\Gamma(2\alpha)}\int\limits_0^x\,  \left(\frac{x^2-u^2}{2x}\right)^{2\alpha -1}\,\left(\frac{u}{x}\right)^{\gamma}\,_2F_1\left(\alpha+\frac{\gamma-1}{2},\alpha;2 \alpha ;1-\frac{u^2}{x^2}\right)\varphi(u)du.
		$$
	Что и дает \eqref{F0+}.
		Доказательство закончено.
	\end{proof}

		\subsection{Преобразование Мейера левосторонных дробных интегралов и производных Бесселя на полуоси}

	В этом разделе мы применим преобразование Мейера к левосторонним дробным интегралам и производным Бесселя на полуоси, а затем, в разделе \ref{Sec4}, мы будем использовать эти результаты для построения явных решений линейного дифференциального уравнения, включающего левостороннюю дробную производную  Бесселя на полуоси типа Герасимова--Капуто с постоянными коэффициентами.
	
	\begin{teo} Пусть $\alpha>0$. Преобразование Мейера $B_{\gamma,0+}^{-\alpha}f\in \mathscr{M}_{\gamma}^a(\mathbb{R}_+)$ имеет вид
		\begin{equation}\label{KTR1}
			\mathcal{K}_\gamma[(B_{\gamma,0+}^{-\alpha}\varphi)(x)](\xi)=\xi^{-2\alpha}\mathcal{K}_\gamma\varphi(\xi).
		\end{equation}
	\end{teo}
	\begin{proof} Начнем с \eqref{KTR1}. Пусть $g(x)=I_2^{0,\alpha}\varphi(x)$. Используя факторизацию \eqref{F0+}, получим
		$$
		\mathcal{K}_\gamma[(B_{\gamma,0+}^{-\alpha}\varphi)(x)](\xi)=\int\limits_{0}^{\infty} {k}_{\frac{\gamma-1}{2}} (x\xi)\,
		(B_{\gamma,0+}^{-\alpha}\varphi)(x)x^\gamma\,dx=
		$$
		$$=\frac{1}{2^{2\alpha}}\int\limits_{0}^{\infty} {k}_{\frac{\gamma-1}{2}} (x\xi)\,
		\,I_2^{\frac{\gamma-1}{2},\alpha}I_2^{0,\alpha}\varphi(x) x^{2\alpha+\gamma}\,dx=
		$$
		$$
		=\frac{1}{2^{2\alpha}}\int\limits_{0}^{\infty} {k}_{\frac{\gamma-1}{2}} (x\xi)\,
		\,I_2^{\frac{\gamma-1}{2},\alpha}g(x) x^{2\alpha+\gamma}\,dx=
		$$
		$$
		=\frac{1}{2^{2\alpha-1}\Gamma(\alpha)}\,\int\limits_{0}^{\infty} {k}_{\frac{\gamma-1}{2}} (x\xi)\,
		x\,dx\int\limits_0^x(x^2-u^2)^{\alpha-1}u^{\gamma}g(u)du=
		$$
		$$
		=\frac{1}{2^{2\alpha-1}\Gamma(\alpha)}\,\int\limits_{0}^{\infty}u^{\gamma}g(u)du\int\limits_u^\infty (x^2-u^2)^{\alpha-1}  {k}_{\frac{\gamma-1}{2}} (x\xi)\,x\,dx.
		$$
		Рассмотрим внутренний интеграл. Применяя формулу
		2.16.3.7 из \cite{IR2} вида
		\begin{equation}\label{ForK}
			\int\limits_a^\infty x^{1\pm\rho} (x^2-a^2)^{\beta-1} K_{\rho}(cx)dx=2^{\beta-1}a^{\beta\pm\rho}c^{-\beta}\Gamma(\beta)K_{\rho\pm\beta}(ac),\qquad a,c,\beta>0
		\end{equation}
		будем иметь
		$$
		\int\limits_u^\infty (x^2-u^2)^{\alpha-1}  {k}_{\frac{\gamma-1}{2}} (x\xi)\,x\,dx=\frac{2^{\frac{\gamma-1}{2}}\Gamma\left({\frac{\gamma+1}{2}} \right) }{\xi^{\frac{\gamma-1}{2}}}\int\limits_u^\infty (x^2-u^2)^{\alpha-1}  K_{\frac{\gamma-1}{2}} (x\xi)\,x^{1-\frac{\gamma-1}{2}}\,dx=
		$$
		$$
		=\frac{2^{\frac{\gamma-1}{2}}\Gamma\left({\frac{\gamma+1}{2}} \right) }{\xi^{\frac{\gamma-1}{2}}}\cdot 2^{\alpha-1}u^{\alpha-\frac{\gamma-1}{2}}\xi^{-\alpha}\Gamma(\alpha)K_{\frac{\gamma-1}{2}-\alpha}(u\xi)
		$$
		и
		$$
		\mathcal{K}_\gamma[(B_{\gamma,0+}^{-\alpha}\varphi)(x)](\xi)=\frac{2^{\frac{\gamma-1}{2}-\alpha}\Gamma\left({\frac{\gamma+1}{2}} \right) }{\xi^{\frac{\gamma-1}{2}+\alpha}}\int\limits_{0}^{\infty}u^{\alpha+\frac{\gamma+1}{2}}K_{\frac{\gamma-1}{2}-\alpha}(u\xi) g(u)du=
		$$
		$$
		=\frac{2^{\frac{\gamma+1}{2}-\alpha}\Gamma\left({\frac{\gamma+1}{2}} \right) }{\Gamma(\alpha)\xi^{\frac{\gamma-1}{2}+\alpha}}\,
		\int\limits_{0}^{\infty}u^{\frac{\gamma+1}{2}-\alpha}K_{\frac{\gamma-1}{2}-\alpha}(u\xi) du\int\limits_0^u(u^2-t^2)^{\alpha-1}t\varphi(t)dt=
		$$
		$$
		=\frac{2^{\frac{\gamma+1}{2}-\alpha}\Gamma\left({\frac{\gamma+1}{2}} \right) }{\Gamma(\alpha)\xi^{\frac{\gamma-1}{2}+\alpha}}\,
		\int\limits_{0}^{\infty}t\varphi(t)dt \int\limits_t^\infty (u^2-t^2)^{\alpha-1}u^{\frac{\gamma+1}{2}-\alpha}K_{\frac{\gamma-1}{2}-\alpha}(u\xi)du.
		$$
		Используя снова  \eqref{ForK}, запишем
		$$
		\int\limits_t^\infty (u^2-t^2)^{\alpha-1}u^{\frac{\gamma+1}{2}-\alpha}K_{\frac{\gamma-1}{2}-\alpha}(u\xi)du=
		2^{\alpha-1}t^{\frac{\gamma-1}{2}}\xi^{-\alpha}\Gamma(\alpha)K_{\frac{\gamma-1}{2}}(t\xi)
		$$
		и
		$$
		\mathcal{K}_\gamma[(B_{\gamma,0+}^{-\alpha}\varphi)(x)](\xi)=\frac{2^{\frac{\gamma+1}{2}-\alpha}\Gamma\left({\frac{\gamma+1}{2}} \right) }{\Gamma(\alpha)\xi^{\frac{\gamma-1}{2}+\alpha}}\,
		\cdot 2^{\alpha-1}\xi^{-\alpha}\Gamma(\alpha)\int\limits_{0}^{\infty}\varphi(t)K_{\frac{\gamma-1}{2}}(t\xi) t^{\frac{\gamma+1}{2}}dt =
		$$
		$$
		=\xi^{-2\alpha}\int\limits_{0}^{\infty}\varphi(t)k_{\frac{\gamma-1}{2}}(t\xi) t^{\gamma}dt=\xi^{-2\alpha}\mathcal{K}_\gamma\varphi.
		$$
		Доказательство закончено.
	\end{proof}
	
	\begin{lem}
		Пусть $n\in\mathbb{N}$  и $B_\gamma^n f\in \mathscr{M}_{\gamma}^a(\mathbb{R}_+)$, тогда для
		 $0\leq\gamma<1$
		$$
		\mathcal{K}_\gamma[B_\gamma^n f](\xi)=\xi^{2n}\mathcal{K}_\gamma[f](\xi)-
		$$
		\begin{equation}\label{MeijBesn01}
			-\sum\limits_{k=1}^{n}\xi^{2k-1-\gamma}B_\gamma^{n-k} f(0+)-\frac{\Gamma\left(\frac{1-\gamma}{2} \right) }{2^\gamma\Gamma\left(\frac{\gamma+1}{2} \right)}\lim\limits_{x\rightarrow 0+}\sum\limits_{k=1}^{n} \xi^{2k-2}
			x^\gamma \frac{d}{dx}[B_\gamma^{n-k} f(x)],
		\end{equation}
		для $\gamma=1$
		\begin{equation}\label{MeijBesn02}
			\mathcal{K}_\gamma[B_\gamma^n f](\xi)=
			\xi^{2n}\mathcal{K}_\gamma[f](\xi)-\sum\limits_{k=1}^{n}\xi^{2k-1-\gamma}B_\gamma^{n-k} f(0+)+\lim\limits_{x\rightarrow 0+}\sum\limits_{k=1}^{n}
			\xi^{2k-2}	\ln{x\xi}\frac{d}{dx}[B_\gamma^{n-k} f(x)],
		\end{equation}
		для $1<\gamma$
		$$
		\mathcal{K}_\gamma[B_\gamma^n f](\xi)=
		$$
		\begin{equation}\label{MeijBesn03}
			=\xi^{2n}\mathcal{K}_\gamma[f](\xi)-\sum\limits_{k=1}^{n}\xi^{2k-1-\gamma}B_\gamma^{n-k} f(0+)-\frac{1}{\gamma-1}\lim\limits_{x\rightarrow 0+}\sum\limits_{k=1}^{n} \xi^{2k-1-\gamma}
			x \frac{d}{dx}[B_\gamma^{n-k} f(x)],
		\end{equation}
		где
		$$
		B_\gamma^{n-k} f(0+)=\lim\limits_{x\rightarrow +0}B_{\gamma}^{n-k} f(x).
		$$
	\end{lem}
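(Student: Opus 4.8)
The plan is to establish the identity for \(n=1\) by a double integration by parts, and then obtain the general case by iteration. Two structural facts drive the argument. First, \(B_\gamma\) is formally self-adjoint with respect to the weight \(x^\gamma\) occurring in \eqref{MeT}, since \(B_\gamma h = x^{-\gamma}\frac{d}{dx}\bigl(x^\gamma\frac{dh}{dx}\bigr)\). Second, the transform kernel is an eigenfunction of \(B_\gamma\) with eigenvalue \(\xi^2\): writing \(\nu=\frac{\gamma-1}{2}\) and using the modified Bessel equation for \(K_\nu\) together with the normalization \eqref{FBess3}, a direct computation gives \(B_\gamma^{(x)}k_{\frac{\gamma-1}{2}}(x\xi)=\xi^2 k_{\frac{\gamma-1}{2}}(x\xi)\), where \(B_\gamma^{(x)}\) acts in the variable \(x\).

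First I would treat \(n=1\). Starting from \(\mathcal{K}_\gamma[B_\gamma h](\xi)=\int_0^\infty k_{\frac{\gamma-1}{2}}(x\xi)\,\frac{d}{dx}\bigl(x^\gamma h'\bigr)\,dx\) and integrating by parts twice, I transfer both derivatives onto the kernel. The resulting interior integral is \(\int_0^\infty B_\gamma^{(x)}\bigl[k_{\frac{\gamma-1}{2}}(x\xi)\bigr]h(x)\,x^\gamma\,dx=\xi^2\,\mathcal{K}_\gamma[h](\xi)\) by the eigenfunction identity, while the two boundary substitutions leave the contribution \(\mathrm{BT}(h)=\bigl[k_{\frac{\gamma-1}{2}}(x\xi)\,x^\gamma h'-x^\gamma\tfrac{d}{dx}k_{\frac{\gamma-1}{2}}(x\xi)\,h\bigr]_0^\infty\).

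The heart of the proof, and the source of the three separate formulas, is the evaluation of \(\mathrm{BT}(h)\) at the endpoint \(x=0\). At \(x=\infty\) both substitutions vanish: \(K_\nu\) decays exponentially and \(h\in\mathscr{M}_\gamma^a(\mathbb{R}_+)\) (in the iteration below \(h=B_\gamma^{n-k}f\)) with \(\mathrm{Re}\,\xi>a\), so the products together with their first derivatives are controlled and tend to \(0\). At \(x=0\) the substitution carrying \(h\) is governed by \eqref{KBessAsDer}, giving \(x^\gamma\tfrac{d}{dx}k_{\frac{\gamma-1}{2}}(x\xi)\to-\xi^{1-\gamma}\) and hence the universal term \(-\xi^{1-\gamma}h(0+)\). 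The substitution carrying \(h'\) is where the cases split according to the behaviour of \(k_{\frac{\gamma-1}{2}}\) near \(0\): by \eqref{KBessAsO1} it grows like \(\frac{1}{\gamma-1}(x\xi)^{1-\gamma}\) when \(\gamma>1\), by \eqref{KBessAsO3} like \(-\ln(x\xi)\) when \(\gamma=1\), and by \eqref{KBessAsO2} it tends to the finite constant \(\frac{\Gamma(\frac{1-\gamma}{2})}{2^\gamma\Gamma(\frac{\gamma+1}{2})}\) when \(0\le\gamma<1\). Inserting each asymptotic and combining with the universal term reproduces exactly the \(n=1\) instances of \eqref{MeijBesn01}, \eqref{MeijBesn02} and \eqref{MeijBesn03}, respectively.

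Finally I would pass to general \(n\) by induction. Applying the case \(n=1\) with \(h=B_\gamma^{n-1}f\) gives \(\mathcal{K}_\gamma[B_\gamma^n f]=\xi^2\mathcal{K}_\gamma[B_\gamma^{n-1}f]+\mathrm{BT}(B_\gamma^{n-1}f)\), and unwinding the recursion yields \(\mathcal{K}_\gamma[B_\gamma^n f]=\xi^{2n}\mathcal{K}_\gamma[f]+\sum_{k=1}^{n}\xi^{2k-2}\,\mathrm{BT}(B_\gamma^{n-k}f)\) after setting \(k=j+1\) in the summation index. The universal part of each \(\mathrm{BT}\) assembles into the common sum \(-\sum_{k=1}^n\xi^{2k-1-\gamma}B_\gamma^{n-k}f(0+)\), while the \(h'\)-part assembles into the \(\gamma\)-dependent sums, completing the three formulas. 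I expect the main technical obstacle to be the rigorous justification of the two integrations by parts and of the interchange of the limit \(x\to0+\) with the finite summation: one must verify that each iterate \(B_\gamma^{n-k}f\) and its first derivative have both the required exponential control at infinity and a sufficiently mild singularity at the origin for every boundary substitution to be finite and for the contributions at \(x=\infty\) to drop out. The hypothesis \(B_\gamma^n f\in\mathscr{M}_\gamma^a(\mathbb{R}_+)\), propagated down to the lower iterates, is precisely what secures this.
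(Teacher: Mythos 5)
Your proposal is correct and follows essentially the same route as the paper: repeated double integration by parts transferring $B_\gamma$ onto the kernel via its eigenfunction property $B_\gamma k_{\frac{\gamma-1}{2}}(x\xi)=\xi^2k_{\frac{\gamma-1}{2}}(x\xi)$, with the three cases arising from the small-argument asymptotics \eqref{KBessAsO2}, \eqref{KBessAsO3}, \eqref{KBessAsO1} of the kernel and the universal term $-\xi^{1-\gamma}h(0+)$ from the derivative asymptotic. The only cosmetic difference is that you organize the iteration as an explicit induction and invoke \eqref{KBessAsDer} directly where the paper recomputes $\frac{d}{dx}k_{\frac{\gamma-1}{2}}(x\xi)$ through $K_{\frac{\gamma+1}{2}}$ and \eqref{AssimK}.
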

	\begin{proof}
		Найдем $\mathcal{K}_\gamma[B_\gamma^n f](\xi)$:
		$$
		\mathcal{K}_\gamma[B_\gamma^n f](\xi)=\int\limits_{0}^{\infty} {k}_{\frac{\gamma-1}{2}} (x\xi)\,
		[B_\gamma^n f(x)]\,x^\gamma\,dx
		=\int\limits_{0}^{\infty} {k}_{\frac{\gamma-1}{2}} (x\xi)\,
		\frac{d}{dx}x^\gamma \frac{d}{dx}[B_\gamma^{n-1} f(x)]\,dx=
		$$
		$$
		={k}_{\frac{\gamma-1}{2}} (x\xi)\,
		x^\gamma \frac{d}{dx}[B_\gamma^{n-1} f(x)]\biggr|_{x=0}^{\infty}-\int\limits_{0}^{\infty}x^\gamma\frac{d}{dx} {k}_{\frac{\gamma-1}{2}} (x\xi)\,
		\frac{d}{dx}[B_\gamma^{n-1} f(x)]\,dx=
		$$
		$$
		=-{k}_{\frac{\gamma-1}{2}} (x\xi)\,
		x^\gamma \frac{d}{dx}[B_\gamma^{n-1} f(x)]\biggr|_{x=0}+\left( x^\gamma \frac{d}{dx}{k}_{\frac{\gamma-1}{2}} (x\xi)\right) \,
		[B_\gamma^{n-1} f(x)]\biggr|_{x=0}+
		$$
		$$
		+\int\limits_{0}^{\infty} [B_\gamma {k}_{\frac{\gamma-1}{2}} (x\xi)]\,
		[B_\gamma^{n-1} f(x)]x^\gamma\,dx=-{k}_{\frac{\gamma-1}{2}} (x\xi)\,
		x^\gamma \frac{d}{dx}[B_\gamma^{n-1} f(x)]\biggr|_{x=0}+
		$$
		$$
		+\left( x^\gamma \frac{d}{dx}{k}_{\frac{\gamma-1}{2}} (x\xi)\right) \,
		[B_\gamma^{n-1} f(x)]\biggr|_{x=0}+\xi^2\int\limits_{0}^{\infty}  {k}_{\frac{\gamma-1}{2}} (x\xi)\,
		[B_\gamma^{n-1} f(x)]x^\gamma\,dx=...
		$$
		$$
		...=\xi^{2n}\int\limits_{0}^{\infty}  {k}_{\frac{\gamma-1}{2}} (x\xi)\,f(x)x^\gamma\,dx+
		$$
		$$
		+\sum\limits_{k=0}^{n-1}\xi^{2k}\left(\left( x^\gamma \frac{d}{dx}{k}_{\frac{\gamma-1}{2}} (x\xi)\right) \,
		[B_\gamma^{n-1-k} f(x)]-{k}_{\frac{\gamma-1}{2}} (x\xi)\,
		x^\gamma \frac{d}{dx}[B_\gamma^{n-1-k} f(x)]\right)\biggr|_{x=0}.
		$$
	Пусть $0\leq\gamma<1$, тогда используя \eqref{KBessAsO2}, получим
		$$
		\lim\limits_{x\rightarrow 0+}{k}_{\frac{\gamma-1}{2}} (x\xi)\,
		x^\gamma \frac{d}{dx}[B_\gamma^{n-1-k} f(x)]=\frac{\Gamma\left(\frac{1-\gamma}{2} \right) }{2^\gamma\Gamma\left(\frac{\gamma+1}{2} \right)}\lim\limits_{x\rightarrow 0+}
		x^\gamma \frac{d}{dx}[B_\gamma^{n-1-k} f(x)].
		$$
		Для $\gamma=1$, применяя \eqref{KBessAsO3}, будем иметь
		$$
		\lim\limits_{x\rightarrow 0+}{k}_{0} (x\xi)\,
		\frac{d}{dx}[B_\gamma^{n-1-k} f(x)]=-\lim\limits_{x\rightarrow 0+}
		\ln{x\xi}\frac{d}{dx}[B_\gamma^{n-1-k} f(x)].
		$$
	Когда $1<\gamma$, используя \eqref{KBessAsO1}, получим
		$$
		\lim\limits_{x\rightarrow 0+}{k}_{\frac{\gamma-1}{2}} (x\xi)\,
		x^\gamma \frac{d}{dx}[B_\gamma^{n-1-k} f(x)]=\frac{1}{\gamma-1}\lim\limits_{x\rightarrow 0+}
		x\xi^{1-\gamma} \frac{d}{dx}[B_\gamma^{n-1-k} f(x)].
		$$
Затем запишем
		$$
		\frac{d}{dx}{k}_{\frac{\gamma-1}{2}} (x\xi)=-\frac{2^{\frac{1-\gamma}{2}}\xi^{\frac{3-\gamma}{2}}
			x^{\frac{1-\gamma}{2}}}{\Gamma \left(\frac{\gamma +1}{2}\right)} K_{\frac{\gamma +1}{2}}(x \xi )
		$$
	и, применяя	 \eqref{AssimK} для близких к нулю $x$, будем иметь
		$$
		x^\gamma\frac{d}{dx}{k}_{\frac{\gamma-1}{2}} (x\xi)=-\frac{2^{\frac{1-\gamma}{2}}}{\Gamma \left(\frac{\gamma +1}{2}\right)} x^{\frac{\gamma+1}{2}}\xi^{\frac{3-\gamma }{2}} K_{\frac{\gamma +1}{2}}(x \xi )\sim
		$$
		$$
		\sim  -\frac{2^{\frac{1-\gamma}{2}}}{\Gamma \left(\frac{\gamma +1}{2}\right)} x^{\frac{\gamma+1}{2}}\xi^{\frac{3-\gamma }{2}}  \frac {\Gamma \left(\frac{\gamma +1}{2}\right)}{2^{1-\frac{\gamma +1}{2}}}(\xi x)^{-\frac{\gamma +1}{2}}
		=-\xi^{1-\gamma},\qquad x\rightarrow 0+,
		$$
	следовательно
		$$
		\lim\limits_{x\rightarrow 0+} \left( x^\gamma \frac{d}{dx}{k}_{\frac{\gamma-1}{2}} (x\xi)\right) \,
		[B_\gamma^{n-1-k} f(x)]=-\xi^{1-\gamma}B_\gamma^{n-1-k} f(0+)
		$$
		и для $0\leq\gamma<1$
		$$
		\mathcal{K}_\gamma[B_\gamma^n f](\xi)=\xi^{2n}\mathcal{K}_\gamma[f](\xi)-\sum\limits_{k=0}^{n-1}\xi^{2k+1-\gamma}B_\gamma^{n-1-k} f(0+)-
		$$
		$$-\frac{\Gamma\left(\frac{1-\gamma}{2} \right) }{2^\gamma\Gamma\left(\frac{\gamma+1}{2} \right)}\sum\limits_{k=0}^{n-1}\xi^{2k}\lim\limits_{x\rightarrow 0+}
		x^\gamma \frac{d}{dx}[B_\gamma^{n-1-k} f(x)]=
		$$
		$$
		=\xi^{2n}\mathcal{K}_\gamma[f](\xi)-\sum\limits_{k=1}^{n}\xi^{2k-1-\gamma}B_\gamma^{n-k} f(0+)-\frac{\Gamma\left(\frac{1-\gamma}{2} \right) }{2^\gamma\Gamma\left(\frac{\gamma+1}{2} \right)}\lim\limits_{x\rightarrow 0+}\sum\limits_{k=1}^{n} \xi^{2k-2}
		x^\gamma \frac{d}{dx}[B_\gamma^{n-k} f(x)],
		$$
	для $\gamma=1$
		$$
		\mathcal{K}_\gamma[B_\gamma^n f](\xi)=
		\xi^{2n}\mathcal{K}_\gamma[f](\xi)-\sum\limits_{k=1}^{n}\xi^{2k-1-\gamma}B_\gamma^{n-k} f(0+)+\lim\limits_{x\rightarrow 0+}\sum\limits_{k=1}^{n}
		\xi^{2k-2}\ln{x\xi}\frac{d}{dx}[B_\gamma^{n-k} f(x)],
		$$
		для $1<\gamma$
		$$
		\mathcal{K}_\gamma[B_\gamma^n f](\xi)=
		$$
		$$
		=\xi^{2n}\mathcal{K}_\gamma[f](\xi)-\sum\limits_{k=1}^{n}\xi^{2k-1-\gamma}B_\gamma^{n-k} f(0+)-\frac{1}{\gamma-1}\lim\limits_{x\rightarrow 0+}\sum\limits_{k=1}^{n} \xi^{2k-1-\gamma}
		x \frac{d}{dx}[B_\gamma^{n-k} f(x)].
		$$
	\end{proof}
	
	\begin{rem}
		Пусть $n\in\mathbb{N}$, $\frac{d}{dx}[B_\gamma^{n-k} f(x)]$ ограничено и $B_\gamma^n f\in \mathscr{M}_{\gamma}^a(\mathbb{R}_+)$ и $\gamma\neq 1$, тогда
		\begin{equation}\label{MeijBesn1}
			\mathcal{K}_\gamma[B_\gamma^n f](\xi)=\xi^{2n}\mathcal{K}_\gamma[f](\xi)-\sum\limits_{k=1}^{n}\xi^{2k-1-\gamma}B_\gamma^{n-k} f(0+).
		\end{equation}
		Если $\frac{d}{dx}[B_\gamma^{n-k} f(x)]\sim x^\eta$, $\eta>0$ при $x\rightarrow 0+$, то \eqref{MeijBesn1} справедливо для $\gamma=1$.
	\end{rem}
	
	\begin{rem} Поскольку $
		k_{-\frac{1}{2}}(x) =e^{-x},
		$
		то
		$$
		\mathcal{K}_0[f](\xi)=\mathcal{L}[f](\xi),
		$$
	где $\mathcal{L}[f]$ --- преобразование Лапласа функции $f$.
		Известно, что
		$$
		\mathcal{L}[f''](\xi)=\xi^2\mathcal{L}[f](\xi)-\xi f(0)-f'(0).
		$$
	С другой стороны,
		$$
		\frac{\Gamma\left(\frac{1-\gamma}{2} \right) }{2^\gamma\Gamma\left(\frac{\gamma+1}{2} \right)}\biggr|_{\gamma=0}=1,\qquad \sum\limits_{k=1}^{n}
		x^\gamma \frac{d}{dx}[B_\gamma^{n-k} f(x)]\biggr|_{\gamma=0,n=1}=f'(x)
		$$
		и
		$$
		\mathcal{K}_0[B_0 f](\xi)= Lf''(\xi)=\xi^2 \mathcal{K}_0[ f](\xi)-\xi f(0)-f'(0)=
		\mathcal{L}[f''](\xi).
		$$
		Аналогичная ситуация справедлива и для $\mathcal{K}_0[B_0^{n} f](\xi)$.
	\end{rem}
	
	\begin{teo}\label{teomej}
	Пусть $n=[\alpha]+1$ для нецелого  $\alpha$ и $n=\alpha$ для $\alpha\in \mathbb{N}$
	и  $\mathcal{B}_{\gamma,0+}^{\alpha} f\in \mathscr{M}_{\gamma}^a(\mathbb{R}_+)$, тогда
		
при $0\leq\gamma<1$
		$$
		\mathcal{K}_\gamma[\mathcal{B}_{\gamma,0+}^{\alpha} f](\xi)=
		$$
		\begin{equation}\label{MeiDr1}
			=	\xi^{2\alpha}\mathcal{K}_\gamma[f](\xi)
			-\sum\limits_{k=0}^{n-1}\xi^{2\alpha-2k-1-\gamma}B_\gamma^{k} f(0+)-\frac{\Gamma\left(\frac{1-\gamma}{2} \right) }{2^\gamma\Gamma\left(\frac{\gamma+1}{2} \right)}\lim\limits_{x\rightarrow 0+}\sum\limits_{k=0}^{n-1} \xi^{2\alpha-2k-2}
			x^\gamma \frac{d}{dx}[B_\gamma^{k} f(x)],
		\end{equation}
		
	при $\gamma=1$
		$$
		\mathcal{K}_\gamma[\mathcal{B}_{\gamma,0+}^{\alpha} f](\xi)
		$$
		\begin{equation}\label{MeiDr12}=\xi^{2\alpha}\mathcal{K}_\gamma[f](\xi)-\sum\limits_{k=0}^{n-1}\xi^{2\alpha-2k-1-\gamma}B_\gamma^{k} f(0+)+\lim\limits_{x\rightarrow 0+}\sum\limits_{k=0}^{n-1}
			\xi^{2\alpha-2k-2}\ln{x\xi}\frac{d}{dx}[B_\gamma^{k} f(x)],
		\end{equation}
		
при $1<\gamma$
		$$
		\mathcal{K}_\gamma[\mathcal{B}_{\gamma,0+}^{\alpha} f](\xi)=
		$$
		\begin{equation}\label{MeiDr3}=\xi^{2\alpha}\mathcal{K}_\gamma[f](\xi)-\sum\limits_{k=0}^{n-1}\xi^{2\alpha-2k-1-\gamma}B_\gamma^{k} f(0+)-\frac{1}{\gamma-1}\lim\limits_{x\rightarrow 0+}\sum\limits_{k=0}^{n-1} \xi^{2\alpha-2k-1-\gamma}
			x \frac{d}{dx}[B_\gamma^{k} f(x)],
		\end{equation}
	где
		$$
		B_{\gamma,0+}^{\alpha-k} f(0+)=\lim\limits_{x\rightarrow +0}B_{\gamma,0+}^{\alpha-k} f(x).
		$$
	\end{teo}
	\begin{proof} Используя   \eqref{KTR1} и \eqref{MeijBesn1} для $0\leq\gamma<1$, получим
		$$
		\mathcal{K}_\gamma[	\mathcal{B}_{\gamma,0+}^{\alpha} f](\xi)=\mathcal{K}_\gamma[(IB_{\gamma,0+}^{n-\alpha}B_\gamma^nf)(x)](\xi)
		=\xi^{2\alpha-2n}\mathcal{K}_\gamma[B_\gamma^nf](\xi)=\xi^{2\alpha}\mathcal{K}_\gamma[f](\xi)-
		$$
		$$
		-\sum\limits_{k=1}^{n}\xi^{2\alpha-2n+2k-1-\gamma}B_\gamma^{n-k} f(0+)-\frac{\Gamma\left(\frac{1-\gamma}{2} \right) }{2^\gamma\Gamma\left(\frac{\gamma+1}{2} \right)}\lim\limits_{x\rightarrow 0+}\sum\limits_{k=1}^{n} \xi^{2\alpha-2n+2k-2}
		x^\gamma \frac{d}{dx}[B_\gamma^{n-k} f(x)]=
		$$
		$$
		=\xi^{2\alpha}\mathcal{K}_\gamma[f](\xi)
		-\sum\limits_{k=0}^{n-1}\xi^{2\alpha-2k-1-\gamma}B_\gamma^{k} f(0+)-\frac{\Gamma\left(\frac{1-\gamma}{2} \right) }{2^\gamma\Gamma\left(\frac{\gamma+1}{2} \right)}\lim\limits_{x\rightarrow 0+}\sum\limits_{k=0}^{n-1} \xi^{2\alpha-2k-2}
		x^\gamma \frac{d}{dx}[B_\gamma^{k} f(x)],
		$$
	где	
		$$
	B_{\gamma,0+}^{k} f(0+)=\lim\limits_{x\rightarrow +0}B_{\gamma,0+}^{k} f(x).
		$$
	Аналогично при $\gamma=1$ будем иметь
		$$
		\mathcal{K}_\gamma[	\mathcal{B}_{\gamma,0+}^{\alpha} f](\xi)=\mathcal{K}_\gamma[(IB_{\gamma,0+}^{n-\alpha}B_\gamma^nf)(x)](\xi)
		=\xi^{2\alpha-2n}\mathcal{K}_\gamma[B_\gamma^nf](\xi)=\xi^{2\alpha}\mathcal{K}_\gamma[f](\xi)-
		$$
		$$
		-\sum\limits_{k=1}^{n}\xi^{2\alpha-2n+2k-1-\gamma}B_\gamma^{n-k} f(0+)+\lim\limits_{x\rightarrow 0+}\sum\limits_{k=1}^{n}
		\xi^{2\alpha-2n+2k-2}\ln{x\xi}\frac{d}{dx}[B_\gamma^{n-k} f(x)]=
		$$
		$$
		=\xi^{2\alpha}\mathcal{K}_\gamma[f](\xi)-\sum\limits_{k=0}^{n-1}\xi^{2\alpha-2k-1-\gamma}B_\gamma^{k} f(0+)+\lim\limits_{x\rightarrow 0+}\sum\limits_{k=0}^{n-1}
		\xi^{2\alpha-2k-2}\ln{x\xi}\frac{d}{dx}[B_\gamma^{k} f(x)]
		$$
	и при $\gamma>1$	
		$$
		\mathcal{K}_\gamma[	\mathcal{B}_{\gamma,0+}^{\alpha} f](\xi)=\mathcal{K}_\gamma[(IB_{\gamma,0+}^{n-\alpha}B_\gamma^nf)(x)](\xi)
		=\xi^{2\alpha-2n}\mathcal{K}_\gamma[B_\gamma^nf](\xi)=
		$$
		$$
		=\xi^{2\alpha}\mathcal{K}_\gamma[f](\xi)-\sum\limits_{k=0}^{n-1}\xi^{2\alpha-2k-1-\gamma}B_\gamma^{k} f(0+)-\frac{1}{\gamma-1}\lim\limits_{x\rightarrow 0+}\sum\limits_{k=0}^{n-1} \xi^{2\alpha-2k-1-\gamma}
		x \frac{d}{dx}[B_\gamma^{k} f(x)].
		$$	
	\end{proof}

	\begin{rem}\label{rem3}
		Пусть $k\in\mathbb{N}$, $\frac{d}{dx}[B_\gamma^{k} f(x)]$ ограничена,  $\mathcal{B}_{\gamma,0+}^{\alpha}f\in \mathscr{M}_{\gamma}^a(\mathbb{R}_+)$ и $\gamma\neq 1$, тогда
		\begin{equation}\label{MeijBesal1}
			\mathcal{K}_\gamma[	\mathcal{B}_{\gamma,0+}^{\alpha} f](\xi)=\xi^{2\alpha}\mathcal{K}_\gamma[f](\xi)
			-\sum\limits_{k=0}^{n-1}\xi^{2\alpha-2k-1-\gamma}B_\gamma^{k} f(0+).
		\end{equation}
		Если  $\frac{d}{dx}[B_\gamma^{k} f(x)]\sim x^\eta$, $\eta>0$ при $x\rightarrow 0+$, то \eqref{MeijBesal1} справедливо и для $\gamma=1$.
	\end{rem}

	\section{Метод преобразования Мейера для решения однородного  уравнения с левосторонней дробной производной Бесселя на полуоси типа Герасимова--Капуто}\label{Sec4}

\subsection{Общий случай}

Используя преобразование Мейера (общая схема применения интегральных преобразований к уравнениям дробного порядка  изложена в \cite{Luchko} и \cite{Thakur})	решим уравнение \begin{equation}\label{EQ}
	(\mathcal{B}_{\gamma,0+}^{\alpha} f)(x)=\lambda f(x),\qquad \alpha>0,\qquad \lambda\in\mathbb{R}
\end{equation}
с левосторонними дробными производными Бесселя на полуоси типа Герасимова--Капуто с постоянным коэффициентом при
 $\gamma\neq 1$.

Пусть $\frac{m-1}{2}<\alpha\leq\frac{m}{2}$, $m\in\mathbb{N}$. К уравнению \eqref{EQ} нужно добавить $m$ условий, которые для $0\leq \gamma<1$ имеют вид
\begin{equation}\label{EQCond}
	(B_{\gamma,0+}^{k} f)(0+)=a_{2k},\qquad \lim\limits_{x\rightarrow 0+}x^\gamma \frac{d}{dx}B_{\gamma,0+}^{k} f(x)=a_{2k+1},\qquad 	 a_{2k},a_{2k+1}\in\mathbb{R},
\end{equation}
а для $\gamma>1$ условия примет вид
\begin{equation}\label{EQCond2}
	(B_{\gamma,0+}^{k} f)(0+)=b_{2k},\qquad	\lim\limits_{x\rightarrow 0+}x \frac{d}{dx}B_{\gamma,0+}^{k} f(x)=b_{2k+1},\qquad 	
	b_{2k},b_{2k+1}\in\mathbb{R},
\end{equation}
где $k\in\mathbb{N}\cup\{0\}$, такое, что выполняются неравенства
$$
0\leq 2k\leq m-1, \qquad  1\leq 2k+1\leq m-2 \qquad \text{если}\,\, m\,\, \text{--- нечетное},
$$
и
$$
1\leq 2k+1\leq m-1, \qquad 0\leq 2k\leq m-2\qquad  \text{если}\,\, m\,\, \text{--- четное}.
$$
Это означает, что для нечетного $m$ последнее условие имеет вид
 $(B_{\gamma,0+}^{k} f)(0+){=}a_{m-1}$ или $(B_{\gamma,0+}^{k} f)(0+){=}b_{m-1}$,
а для четного $m$ последнее условие имеет вид $\lim\limits_{x\rightarrow 0+}x^\gamma \frac{d}{dx}B_{\gamma,0+}^{k} f(x){=}a_{m-1}$
или $\lim\limits_{x\rightarrow 0+}x \frac{d}{dx}B_{\gamma,0+}^{k} f(x){=}b_{m-1}$.

\begin{exampl} 
	
	При $m=1$  имеем, что $k=0$ и при $m=1$ к \eqref{EQ} добавляется только одно условие
		$$
f(0+)=a_0\quad \text{при}\quad 0\leq \gamma<1\quad \text{и}\quad f(0+)=b_0\quad \text{при}\quad \gamma>1. 
$$

		При $m=2$ также имеем, что $k=0$, но к \eqref{EQ} добавляются два условия
				$$
		f(0+)=a_0,\quad \lim\limits_{x\rightarrow 0+}x^\gamma \frac{df}{dx}=a_{1}\quad \text{при}\quad 0\leq \gamma<1\quad \text{и}
		$$
		$$
		f(0+)=b_0,\quad	\lim\limits_{x\rightarrow 0+}x \frac{df}{dx}=b_{1}\quad \text{при}\quad \gamma>1. 
		$$
		
\end{exampl}

\begin{teo} При $0\leq\gamma<1$ решение задачи \eqref{EQ}--\eqref{EQCond} 
	
в случае нечетного $m$ имеет вид
	$$
	f(x)=\frac{2^\gamma \Gamma\left(\frac{\gamma+1}{2}\right)}{\sqrt{\pi}}\sum\limits_{k=0}^{\frac{m-1}{2}}a_{2k}\,
	\,x^{2k} \,_2\Psi_2\left[\left.
	\begin{array}{c}
	$$\left(k+1+\frac{\gamma}{2},\alpha\right), (1,1)$$ \\
	$$\left(k+1,\alpha\right), \left(2k+\gamma+1,2\alpha\right)$$ \\
	\end{array}
	\right|\lambda x^{2 \alpha}\right]+
	$$
	\begin{equation}\label{Sol1}
		+	\frac{\Gamma\left(\frac{1-\gamma}{2} \right) }{\sqrt{\pi}}\sum\limits_{k=0}^{\frac{m-3}{2}}
		a_{2k+1}\,  \,x^{2k+1-\gamma}\,_2\Psi_2\left[\left.
		\begin{array}{c}
			$$\left(k+\frac{3}{2},\alpha\right), (1,1)$$ \\
			$$\left(k+\frac{3-\gamma}{2},\alpha\right), \left( 2k+2,2\alpha\right)$$ \\
		\end{array}
		\right|\lambda x^{2 \alpha}\right],
	\end{equation}
где вторая сумма исчезает при  $\frac{m-3}{2}<0$, то есть при $m=1$,
	
в случае четного $m$ имеет вид
	$$
	f(x)=
	\frac{2^\gamma \Gamma\left(\frac{\gamma+1}{2}\right)}{\sqrt{\pi}}\sum\limits_{k=0}^{\frac{m-2}{2}}a_{2k}\,
	\,x^{2k} \,_2\Psi_2\left[\left.
	\begin{array}{c}
	$$\left(k+1+\frac{\gamma}{2},\alpha\right), (1,1)$$ \\
	$$\left(k+1,\alpha\right), \left(2k+\gamma+1,2\alpha\right)$$ \\
	\end{array}
	\right|\lambda x^{2 \alpha}\right]+
	$$
	\begin{equation}\label{Sol2}
		+\frac{\Gamma\left(\frac{1-\gamma}{2} \right) }{\sqrt{\pi}}\sum\limits_{k=0}^{\frac{m-2}{2}}
		a_{2k+1}\,  \,x^{2k+1-\gamma}\,_2\Psi_2\left[\left.
		\begin{array}{c}
			$$\left(k+\frac{3}{2},\alpha\right), (1,1)$$ \\
			$$\left(k+\frac{3-\gamma}{2},\alpha\right), \left( 2k+2,2\alpha\right)$$ \\
		\end{array}
		\right|\lambda x^{2 \alpha}\right].
	\end{equation}
	
	Для $\gamma>1$ решение \eqref{EQ}--\eqref{EQCond2} 
	
в случае нечетного  $m$ имеет вид
	$$
	f(x)
	=
	\frac{2^\gamma \Gamma\left(\frac{\gamma+1}{2}\right)}{\sqrt{\pi}}\sum\limits_{k=0}^{\frac{m-1}{2}}b_{2k}\,
	\,x^{2k} \,_2\Psi_2\left[\left.
	\begin{array}{c}
	$$\left(k+1+\frac{\gamma}{2},\alpha\right), (1,1)$$ \\
	$$\left(k+1,\alpha\right), \left(2k+\gamma+1,2\alpha\right)$$ \\
	\end{array}
	\right|\lambda x^{2 \alpha}\right]+
	$$
	\begin{equation}\label{Sol3}
		+	\frac{2^\gamma \Gamma\left(\frac{\gamma+1}{2}\right)}{\sqrt{\pi}(\gamma-1)}
		\sum\limits_{k=0}^{\frac{m-3}{2}}
		b_{2k+1}\,  \,x^{2k} \,_2\Psi_2\left[\left.
		\begin{array}{c}
			$$\left(k+1+\frac{\gamma}{2},\alpha\right), (1,1)$$ \\
			$$\left(k+1,\alpha\right), \left(2k+\gamma+1,2\alpha\right)$$ \\
		\end{array}
		\right|\lambda x^{2 \alpha}\right],
	\end{equation}
где вторая сумма исчезает при  $\frac{m-3}{2}<0$, то есть при $m=1$,

в случае четного $m$ имеет вид
	$$
	f(x)=
	\frac{2^\gamma \Gamma\left(\frac{\gamma+1}{2}\right)}{\sqrt{\pi}}\sum\limits_{k=0}^{\frac{m-2}{2}}b_{2k}\,
	\,x^{2k} \,_2\Psi_2\left[\left.
	\begin{array}{c}
	$$\left(k+1+\frac{\gamma}{2},\alpha\right), (1,1)$$ \\
	$$\left(k+1,\alpha\right), \left(2k+\gamma+1,2\alpha\right)$$ \\
	\end{array}
	\right|\lambda x^{2 \alpha}\right]
	+
	$$
	\begin{equation}\label{Sol4}
		+\frac{2^\gamma \Gamma\left(\frac{\gamma+1}{2}\right)}{\sqrt{\pi}(\gamma-1)}
		\sum\limits_{k=0}^{\frac{m-2}{2}}
		b_{2k+1}\,  \,x^{2k} \,_2\Psi_2\left[\left.
		\begin{array}{c}
			$$\left(k+1+\frac{\gamma}{2},\alpha\right), (1,1)$$ \\
			$$\left(k+1,\alpha\right), \left(2k+\gamma+1,2\alpha\right)$$ \\
		\end{array}
		\right|\lambda x^{2 \alpha}\right].
	\end{equation}
	
Здесь  $_p\Psi_q(z)$ --- функция Райта--Фокса \eqref{Wright}.
\end{teo}
\begin{proof}Рассмотрим сначала случай  $0\leq\gamma<1$.
Применяя преобразование Мейера \eqref{MeT}
к обеим частям \eqref{EQ} и используя  \eqref{MeiDr1}, получим
	$$
	\xi^{2\alpha}\mathcal{K}_\gamma[f](\xi)
	-\sum\limits_{k=0}^{n-1}\xi^{2\alpha-2k-1-\gamma}B_\gamma^{k} f(0+)-\frac{\Gamma\left(\frac{1-\gamma}{2} \right) }{2^\gamma\Gamma\left(\frac{\gamma+1}{2} \right)}\lim\limits_{x\rightarrow 0+}\sum\limits_{k=0}^{n-1} \xi^{2\alpha-2k-2}
	x^\gamma \frac{d}{dx}[B_\gamma^{k} f(x)]= \lambda \mathcal{K}_\gamma[f](\xi),
	$$
где $n\in\mathbb{N}$, $n-1<\alpha\leq n$.
	Принимая во внимание условия \eqref{EQCond}, будем иметь
	
для случая нечетного $m$ будем иметь
	$$
	\xi^{2\alpha}\mathcal{K}_\gamma[f](\xi)	-\sum\limits_{k=0}^{\frac{m-1}{2}}a_{2k}\xi^{2\alpha-2k-1-\gamma}-\frac{\Gamma\left(\frac{1-\gamma}{2} \right) }{2^\gamma\Gamma\left(\frac{\gamma+1}{2} \right)}\sum\limits_{k=0}^{\frac{m-3}{2}}
	a_{2k+1}\xi^{2\alpha-2k-2}	= \lambda \mathcal{K}_\gamma[f](\xi),
	$$
где вторая сумма исчезает при  $\frac{m-3}{2}<0$, то есть при $m=1$,
	
для случая четного $m$ будем иметь
	$$
	\xi^{2\alpha}\mathcal{K}_\gamma[f](\xi)-\sum\limits_{k=0}^{\frac{m-2}{2}}a_{2k}\xi^{2\alpha-2k-1-\gamma}-\frac{\Gamma\left(\frac{1-\gamma}{2} \right) }{2^\gamma\Gamma\left(\frac{\gamma+1}{2} \right)}\sum\limits_{k=0}^{\frac{m-2}{2}}
	a_{2k-1}\xi^{2\alpha-2k-2}		= \lambda \mathcal{K}_\gamma[f](\xi).
	$$

Следовательно,
	
для случая нечетного $m$ будем иметь
	$$
	f(x)=\sum\limits_{k=0}^{\frac{m-1}{2}}a_{2k}\,
	\mathcal{K}_\gamma^{-1}\left[\frac{\xi^{2\alpha-2k-1-\gamma}}{\xi^{2\alpha}-\lambda}\right](x)+\frac{\Gamma\left(\frac{1-\gamma}{2} \right) }{2^\gamma\Gamma\left(\frac{\gamma+1}{2} \right)}\sum\limits_{k=0}^{\frac{m-3}{2}}
	a_{2k+1}\,\mathcal{K}_\gamma^{-1}\left[ \frac{\xi^{2\alpha-2k-2}}{\xi^{2\alpha}-\lambda}\right](x),
	$$
	
для случая четного $m$ будем иметь
	$$
	f(x)=\sum\limits_{k=0}^{\frac{m-2}{2}}a_{2k}\,\mathcal{K}_\gamma^{-1}\left[\frac{\xi^{2\alpha-2k-1-\gamma}}{\xi^{2\alpha}-\lambda}\right](x)+\frac{\Gamma\left(\frac{1-\gamma}{2} \right) }{2^\gamma\Gamma\left(\frac{\gamma+1}{2} \right)}\sum\limits_{k=0}^{\frac{m-2}{2}}
	a_{2k+1}\,\mathcal{K}_\gamma^{-1}\left[\frac{\xi^{2\alpha-2k-2}}{\xi^{2\alpha}-\lambda}\right](x).
	$$

	Для того чтобы найти явное выражение для $f$,  будем использовать формулу \eqref{InvK}. 
	Итак, сначала  найдем обратное преобразования Лапласа
	с учетом формулы  \eqref{LapOfML}:
	$$
	\mathcal{L}^{-1}\left[\frac{\xi^{2\alpha-2k-2}}{\xi^{2\alpha}-\lambda}\right](x)=x^{2k+1}E_{2\alpha,2k+2}(\lambda x^{2\alpha}),
	$$
	$$ \mathcal{L}^{-1}\left[\frac{\xi^{2\alpha-2k-1-\gamma}}{\xi^{2\alpha}-\lambda}\right](x)=x^{2k+\gamma}E_{2\alpha,2k+\gamma+1}(\lambda x^{2\alpha}).
	$$
Теперь найдем
 $$
	(\mathcal{P}_x^\gamma)^{-1}x^{\beta-\gamma}E_{2\alpha,\beta}(\lambda x^{2\alpha}).
	$$
	Используя \eqref{ObrPuass}, запишем
	$$
	(\mathcal{P}_x^\gamma)^{-1}x^{\beta-\gamma}E_{2\alpha,\beta}(\lambda x^{2\alpha})=\frac{2\sqrt{\pi}x}{\Gamma\left(\frac{\gamma+1}{2}\right)\Gamma \left( p-\frac{\gamma}{2} \right) } \left(\frac{d}{2xdx} \right)^{p} \int\limits_{0}^{x} z^{\beta}E_{2\alpha,\beta}(\lambda z^{2\alpha})  (x^2-z^2)^{p-\frac{\gamma}{2}-1} dz,
	$$
где $$
	p=\left[\frac{\gamma}{2}\right]+1.
	$$
Получим
	$$
	E_{2\alpha,\beta}(\lambda z^{2\alpha})=
	\sum_{m=0}^{\infty}\frac{\lambda^m z^{2m\alpha}}{\Gamma(2\alpha m+\beta)}
	$$
и
	$$
	\int\limits_{0}^{x} z^{\beta}E_{2\alpha,\beta}(\lambda z^{2\alpha})  (x^2-z^2)^{p-\frac{\gamma}{2}-1} dz=
	\sum_{m=0}^{\infty}\frac{\lambda^m}{\Gamma(2\alpha m+\beta)}\int\limits_{0}^{x} z^{2m\alpha+\beta}  (x^2-z^2)^{p-\frac{\gamma}{2}-1} dz=
	$$
	$$
	=\sum_{m=0}^{\infty}\frac{\lambda^m}{\Gamma(2\alpha m+\beta)}\,\frac{\Gamma \left( m\alpha+\frac{\beta+1}{2} \right)\Gamma \left( p-\frac{\gamma}{2} \right)}{2\Gamma \left( m\alpha+p+\frac{\beta-\gamma+1}{2} \right)}x^{2m\alpha+2p+\beta-\gamma-1}.
	$$
Следовательно,
	$$
	(\mathcal{P}_x^\gamma)^{-1}x^{\beta-\gamma}E_{2\alpha,\beta}(\lambda x^{2\alpha})=\frac{\sqrt{\pi}x}{\Gamma\left(\frac{\gamma+1}{2}\right)} \left(\frac{d}{2xdx} \right)^{p} \sum_{m=0}^{\infty}\frac{\lambda^m}{\Gamma(2\alpha m+\beta)}\,\frac{\Gamma \left( m\alpha+\frac{\beta+1}{2} \right)}{\Gamma \left( m\alpha+p+\frac{\beta-\gamma+1}{2} \right)}x^{2m\alpha+2p+\beta-\gamma-1}.
	$$
Используя формулу
	$$
	\left( \frac{d}{2xdx}\right)^nx^{2\mu+2n}=\frac{\Gamma(\mu+n+1)}{\Gamma(\mu+1)}x^{2\mu},
	$$
запишем
	$$
	(\mathcal{P}_x^\gamma)^{-1}x^{\beta-\gamma}E_{2\alpha,\beta}(\lambda x^{2\alpha})=\frac{\sqrt{\pi}x^{\beta-\gamma}}{\Gamma\left(\frac{\gamma+1}{2}\right)}  \sum_{m=0}^{\infty}\frac{\Gamma \left( m\alpha+\frac{\beta+1}{2} \right)}{\Gamma(2\alpha m+\beta)\Gamma \left( m\alpha+\frac{\beta-\gamma+1}{2} \right)}(\lambda x^{2\alpha})^m .
	$$
Принимая во внимание вид функции Фокса--Райта \eqref{Wright}, мы можем записать
	$$
	(\mathcal{P}_x^\gamma)^{-1}x^{\beta-\gamma}E_{2\alpha,\beta}(\lambda x^{2\alpha})=\frac{\sqrt{\pi}x^{\beta-\gamma}}{\Gamma\left(\frac{\gamma+1}{2}\right)}  \,_2\Psi_2\left[\left.
	\begin{array}{c}
	$$\left(\frac{\beta+1}{2},\alpha\right), (1,1)$$ \\
	$$\left(\frac{\beta-\gamma+1}{2},\alpha\right), \left( \beta,2\alpha\right)$$ \\
	\end{array}
	\right|\lambda x^{2 \alpha}\right] .
	$$
Тогда
	$$
	\mathcal{K}_\gamma^{-1}\left[ \frac{\xi^{2\alpha-2k-2}}{\xi^{2\alpha}-\lambda}\right](x)=\frac{1}{A_\gamma x}(\mathcal{P}_x^\gamma)^{-1}x^{1-\gamma}\left(\mathcal{L}^{-1}\left[ \frac{\xi^{2\alpha-2k-2}}{\xi^{2\alpha}-\lambda}\right]\right)(x)=
	$$
	$$
	=\frac{1}{A_\gamma x}(\mathcal{P}_x^\gamma)^{-1}x^{2k+2-\gamma}E_{2\alpha,2k+2}(\lambda x^{2\alpha})=
	$$
	$$
	=\frac{2^\gamma \Gamma\left(\frac{\gamma+1}{2}\right)}{\sqrt{\pi}}  \,x^{2k+1-\gamma}\,_2\Psi_2\left[\left.
	\begin{array}{c}
	$$\left(k+\frac{3}{2},\alpha\right), (1,1)$$ \\
	$$\left(k+\frac{3-\gamma}{2},\alpha\right), \left( 2k+2,2\alpha\right)$$ \\
	\end{array}
	\right|\lambda x^{2 \alpha}\right],
	$$
	\begin{equation}\label{Inv2}
		\mathcal{K}_\gamma^{-1}\left[\frac{\xi^{2\alpha-2k-1-\gamma}}{\xi^{2\alpha}-\lambda}\right](x)=\frac{2^\gamma \Gamma\left(\frac{\gamma+1}{2}\right)}{\sqrt{\pi}}  \,x^{2k} \,_2\Psi_2\left[\left.
		\begin{array}{c}
			$$\left(k+1+\frac{\gamma}{2},\alpha\right), (1,1)$$ \\
			$$\left(k+1,\alpha\right), \left(2k+\gamma+1,2\alpha\right)$$ \\
		\end{array}
		\right|\lambda x^{2 \alpha}\right].
	\end{equation}
	Таким образом, для случая нечетного $m$ получим \eqref{Sol1}, а для случая четного  $m$ --- \eqref{Sol2}.

Для $\gamma>1$, применяя преобразование Мейера \eqref{MeT} к обеим частям  \eqref{EQ} и используя  \eqref{MeiDr3}, получим
	$$
	\xi^{2\alpha}\mathcal{K}_\gamma[f](\xi)
	-\sum\limits_{k=0}^{n-1}\xi^{2\alpha-2k-1-\gamma}B_\gamma^{k} f(0+)-\frac{1}{\gamma-1}\lim\limits_{x\rightarrow 0+}\sum\limits_{k=0}^{n-1} \xi^{2\alpha-2k-1-\gamma}
	x \frac{d}{dx}[B_\gamma^{k} f(x)]= \lambda \mathcal{K}_\gamma[f](\xi),
	$$
где $n\in\mathbb{N}$, $n-1<\alpha\leq n$.
	Принимая во внимание условия \eqref{EQCond2}, получим

	для случая нечетного $m$ будем иметь
	$$
	f(x)=\sum\limits_{k=0}^{\frac{m-1}{2}}b_{2k}\,
	\mathcal{K}_\gamma^{-1}\left[\frac{\xi^{2\alpha-2k-1-\gamma}}{\xi^{2\alpha}-\lambda}\right](x)+\frac{1}{\gamma-1}\sum\limits_{k=0}^{\frac{m-3}{2}}
	b_{2k+1}\,\mathcal{K}_\gamma^{-1}\left[ \frac{\xi^{2\alpha-2k-1-\gamma}}{\xi^{2\alpha}-\lambda}\right](x),
	$$

для случая четного $m$ будем иметь
	$$
	f(x)=\sum\limits_{k=0}^{\frac{m-2}{2}}b_{2k}\,\mathcal{K}_\gamma^{-1}\left[\frac{\xi^{2\alpha-2k-1-\gamma}}{\xi^{2\alpha}-\lambda}\right](x)+
	\frac{1}{\gamma-1}\sum\limits_{k=0}^{\frac{m-2}{2}}
	b_{2k+1}\,\mathcal{K}_\gamma^{-1}\left[\frac{\xi^{2\alpha-2k-1-\gamma}}{\xi^{2\alpha}-\lambda}\right](x).
	$$
	
	Следовательно, применяя \eqref{Inv2}, получим \eqref{Sol3} и \eqref{Sol4}, соответственно.

\end{proof}

\subsection{Частные случаи и примеры}

В этом разделе сначала рассмотрим уравнение \eqref{EQ}, в случае, когда выполняются условия замечания \ref{rem3}. 
Затем приведем несколько примеров.

\begin{teo}\label{teo2}
	Пусть $m\in\mathbb{N}$,  $k\in\mathbb{N}\cup\{0\}$, $\frac{m-1}{2}<\alpha\leq\frac{m}{2}$, $\frac{d}{dx}[B_\gamma^{k} f(x)]$ ограничена для $0<\gamma$, $\gamma\neq 1$ и $\frac{d}{dx}[B_\gamma^{k} f(x)]\sim x^\beta$, $\beta>0$ при $x\rightarrow 0+$ в случае $\gamma=1$, тогда решение уравнения
	\begin{equation}\label{EQ2}
		(\mathcal{B}_{\gamma,0+}^{\alpha} f)(x)=\lambda f(x),\qquad \alpha>0,\qquad \lambda\in\mathbb{R}
	\end{equation}
	
с $m$ условиями для $0\leq \gamma<1$ вида
	\begin{equation}\label{EQCond5}
		(B_{\gamma,0+}^{k} f)(0+)=a_{2k},\qquad 	\lim\limits_{x\rightarrow 0+}x^\gamma \frac{d}{dx}B_{\gamma,0+}^{k} f(x)=0,	
	\end{equation}
	
с $m$ условиями для   $\gamma=1$ вида
	\begin{equation}\label{EQCond6}	
		(B_{\gamma,0+}^{k} f)(0+)=a_{2k},\qquad 	\lim\limits_{x\rightarrow 0+}	\ln{x}\frac{d}{dx}[B_\gamma^{k} f(x)]=0,	
	\end{equation}	
	
с $m$ условиями для   $\gamma>1$ вида
	\begin{equation}\label{EQCond7}
		(B_{\gamma,0+}^{k} f)(0+)=a_{2k},\qquad 	\lim\limits_{x\rightarrow 0+}x \frac{d}{dx}B_{\gamma,0+}^{k} f(x)=0,	
	\end{equation}
где $a_{2k}\in\mathbb{R}$ и
	$k$ такие, что следующие неравенства верны
	$$
	0\leq 2k\leq m-1, \qquad  1\leq 2k+1\leq m-2 \qquad \text{если}\,\, m\,\, \text{--- нечетное},
	$$
и
	$$
	1\leq 2k+1\leq m-1, \qquad 0\leq 2k\leq m-2\qquad  \text{если}\,\, m\,\, \text{--- четное}.
	$$
	Для случая нечетного $m$ решение примет вид 
	\begin{equation}\label{Sol5}
		f(x)=\frac{2^\gamma \Gamma\left(\frac{\gamma+1}{2}\right)}{\sqrt{\pi}}\sum\limits_{k=0}^{\frac{m-1}{2}}a_{2k}\,
		\,x^{2k} \,_2\Psi_2\left[\left.
		\begin{array}{c}
			$$\left(k+1+\frac{\gamma}{2},\alpha\right), (1,1)$$ \\
			$$\left(k+1,\alpha\right), \left(2k+\gamma+1,2\alpha\right)$$ \\
		\end{array}
		\right|\lambda x^{2 \alpha}\right],
	\end{equation}
а для случая четного $m$  решение примет вид
	\begin{equation}\label{Sol6}
		f(x)=\frac{2^\gamma \Gamma\left(\frac{\gamma+1}{2}\right)}{\sqrt{\pi}}\sum\limits_{k=0}^{\frac{m-2}{2}}a_{2k}\,
		\,x^{2k} \,_2\Psi_2\left[\left.
		\begin{array}{c}
			$$\left(k+1+\frac{\gamma}{2},\alpha\right), (1,1)$$ \\
			$$\left(k+1,\alpha\right), \left(2k+\gamma+1,2\alpha\right)$$ \\
		\end{array}
		\right|\lambda x^{2 \alpha}\right].
	\end{equation}
Здесь  $_p\Psi_q(z)$ --- функция Райта--Фокса \eqref{Wright}.
\end{teo}

\begin{exampl} Рассмотрим общий случай для задачи \eqref{EQ}--\eqref{EQCond} при $0<\alpha\leq\frac{1}{2}$, $0\leq\gamma<1$. В  этом случае $m=1$, $2k=0$
	используя \eqref{Sol1}, получим, что решение задачи
	$$
	(\mathcal{B}_{\gamma,0+}^{\alpha} f)(x)=\lambda f(x),\qquad \alpha>0,\qquad \lambda\in\mathbb{R},
	$$
	$$
	f(0+)=a_{0},\qquad a_{1}\in\mathbb{R}
	$$
имеет вид
	\begin{equation}\label{Sol7}
		f(x)=\frac{2^\gamma \Gamma\left(\frac{\gamma+1}{2}\right)}{\sqrt{\pi}}a_{0}\,
		\,_2\Psi_2\left[\left.
		\begin{array}{c}
			$$\left(1+\frac{\gamma}{2},\alpha\right), (1,1)$$ \\
			$$\left(1,\alpha\right), \left(\gamma+1,2\alpha\right)$$ \\
		\end{array}
		\right|\lambda x^{2 \alpha}\right].
	\end{equation}
Легко видеть, что для $\gamma>1$ и для $0<\alpha\leq\frac{1}{2}$ решение имеет такой же вид.	
На рисунке 1 представлен график  $f$ при $\gamma=\frac{1}{3}$ и при $\gamma=5$ когда $\alpha=\frac{1}{2}$, $\lambda=1$.	
	\begin{figure}[h!]
		\center{\includegraphics[width=0.7\linewidth]{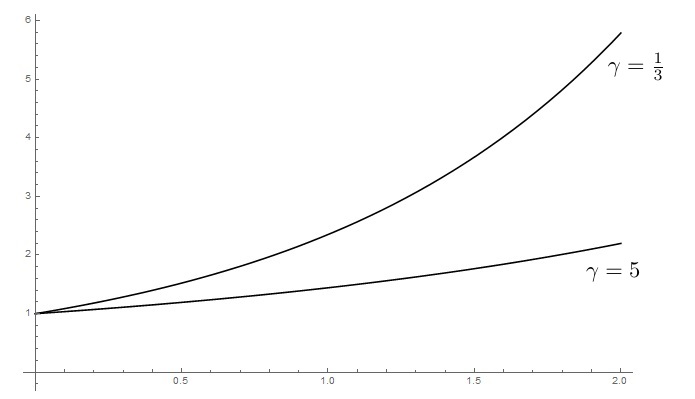} }
		\caption{График решения \eqref{Sol7} при $\gamma=\frac{1}{3}$ и при $\gamma=5$ когда $\alpha=\frac{1}{2}$, $\lambda=1$.}
	\end{figure}
	
	Когда $\gamma=0$ мы получим
	$$
	(\,^{GC}D_{0+}^{2\alpha} f)(x)=\lambda f(x),\qquad 0<2\alpha\leq 1,\qquad \lambda\in\mathbb{R},
	$$
	$$
	f(0+)=a_{1},\qquad a_{1}\in\mathbb{R}
	$$
	и, используя  \eqref{FWML}, запишем
	$$
	f(x)=a_{0} \,_2\Psi_2\left[\left.
	\begin{array}{c}
	$$\left(1,\alpha\right), (1,1)$$ \\
	$$\left(1,\alpha\right), \left(1,2\alpha\right)$$ \\
	\end{array}
	\right|\lambda x^{2 \alpha}\right]=a_{0} \,_1\Psi_1\left[\left.
	\begin{array}{c}
	$$ (1,1)$$ \\
	$$ \left(1,2\alpha\right)$$ \\
	\end{array}
	\right|\lambda x^{2 \alpha}\right]=a_{0}\, E_{2\alpha,1}(\lambda x^{2 \alpha}),
	$$
что совпадает с \eqref{SolCap} если $l=1$ и $2\alpha$ взято вместо $\alpha$.	
\end{exampl}

\begin{exampl}  Рассмотрим случай, представленный в Теореме \ref{teo2}  при $\alpha=1$, $b_{0}=1$, $\lambda=-1$. В этом случае $m=2$, $2k=0$, $2k+1=1$,
	что означает $k=0$. Используя \eqref{Sol6}, получим, что решение задачи
	$$
	{B}_{\gamma} f(x)=- f(x),\qquad  \lambda\in\mathbb{R},
	$$
	$$
	f(0+)=1,\qquad f'(0+)=0
	$$
	имеет вид
	$$
	f(x)=	\frac{2^\gamma \Gamma\left(\frac{\gamma+1}{2}\right)}{\sqrt{\pi}}
	\,_2\Psi_2\left[\left.
	\begin{array}{c}
	$$\left(1+\frac{\gamma}{2},1\right), (1,1)$$ \\
	$$\left(1,1\right), \left(\gamma+1,2\right)$$ \\
	\end{array}
	\right|-x^{2}\right]=\frac{2^\gamma \Gamma\left(\frac{\gamma+1}{2}\right)}{\sqrt{\pi}}
	\,_2\Psi_2\left[\left.
	\begin{array}{c}
	$$\left(1+\frac{\gamma}{2},1\right)$$ \\
	$$\left(\gamma+1,2\right)$$ \\
	\end{array}
	\right|-x^{2}\right]=
	$$
	$$
	=\frac{2^\gamma \Gamma\left(\frac{\gamma+1}{2}\right)}{\sqrt{\pi}}	\sum\limits_{m=0}^\infty \frac{(-1)^m\Gamma\left(1+\frac{\gamma}{2}+m\right)}{\Gamma\left( \gamma+1+2m\right) }
	\frac{x^{2m}}{m!}.
	$$
Использование формулы удвоения Лежандра вида
	$$
	\Gamma(2z)=\frac{2^{2z-1}}{\sqrt{\pi}}\Gamma(z)\Gamma\left(z+\frac{1}{2}\right)
	$$
получим
	$$
	f(x)=2^\gamma \Gamma\left(\frac{\gamma+1}{2}\right)
	\sum\limits_{m=0}^\infty \frac{(-1)^m\Gamma\left(1+\frac{\gamma}{2}+m\right)}{2^{\gamma+2m}\Gamma\left(1+\frac{\gamma}{2}+m\right)\Gamma\left(\frac{\gamma+1}{2}+m\right) }
	\frac{x^{2m}}{m!}=
	$$
	\begin{equation}\label{Sol8}
		=\frac{2^\frac{\gamma-1}{2} \Gamma\left(\frac{\gamma+1}{2}\right)}{x^\frac{\gamma-1}{2}}
		\sum\limits_{m=0}^\infty \frac{(-1)^m}{\Gamma\left(\frac{\gamma+1}{2}+m\right) }
		\frac{1}{m!}\left(\frac{x}{2}\right)^{2m+\frac{\gamma-1}{2}}= j_{\frac{\gamma-1}{2}}(x),
	\end{equation}
где
	$$
	j_\nu(x) =\frac{2^\nu\Gamma(\nu+1)}{x^\nu}\,\,J_\nu(x).
	$$
	Для $j_\frac{\gamma-1}{2}(x)$ имеем
	$$
	B_{\gamma} {j}_{\frac{\gamma-1}{2}}(\tau x)=-\tau^2{j}_{\frac{\gamma-1}{2}}(\tau x).
	$$
Следовательно, функция
	$$
	\,_2\Psi_2\left[\left.
	\begin{array}{c}
	$$\left(1+\frac{\gamma}{2},\alpha\right), (1,1)$$ \\
	$$\left(1,\alpha\right), \left(\gamma+1,2\alpha\right)$$ \\
	\end{array}
	\right|\lambda x^{2 \alpha}\right]
	$$
может быть рассмотрена как обобщение  $j_\frac{\gamma-1}{2}$.
\end{exampl}

\begin{exampl} Легко видеть, что при $\gamma=1$ оператор Бесселя есть двумерный оператор Лапласа в полярных координатах $x=r\cos\varphi$,
	$y=r\sin\varphi$, действующий на радиальную функцию $f=f(r)$:
	$$
	\Delta f=\frac{\partial^2 f}{\partial x^2}+\frac{\partial^2 f}{\partial y^2}=\frac{d^2f}{dr^2}+\frac{1}{r}\frac{df}{dr}.
	$$ 
Тогда уравнение \eqref{EQ} примет вид
\begin{equation}\label{rad}
\left(\frac{d^2}{dr^2}+\frac{1}{r}\frac{d}{dr} \right)^\alpha f(r)=\lambda f(r),
\end{equation}
где $\left(\frac{d^2}{dr^2}+\frac{1}{r}\frac{d}{dr} \right)^\alpha$ понимается в смысле определения \ref{def1}.

Пусть $m\in\mathbb{N}$,  $k\in\mathbb{N}\cup\{0\}$,   $\frac{m-1}{2}<\alpha\leq\frac{m}{2}$.
Если $\frac{d}{dr}[B_1^{k} f(r)]\sim r^\beta$, $\beta>0$ при $r\rightarrow 0+$, то к \eqref{rad} добавляются $m$ условий
	\begin{equation}\label{EQCond8}	
	\lim\limits_{r\rightarrow 0+}	\left(\frac{d^2}{dr^2}+\frac{1}{r}\frac{d}{dr} \right)^{k} f(r)=a_{2k},\qquad 	\lim\limits_{r\rightarrow 0+}	\ln{r}\frac{d}{dr}\left[\left(\frac{d^2}{dr^2}+\frac{1}{r}\frac{d}{dr} \right)^{k} f(r)\right]=0,	
\end{equation}	
	$k\in\mathbb{N}\cup\{0\}$, такие, что следующие неравенства верны
$$
0\leq 2k\leq m-1, \qquad  1\leq 2k+1\leq m-2 \qquad \text{если}\,\, m\,\, \text{--- нечетное},
$$
и
$$
1\leq 2k+1\leq m-1, \qquad 0\leq 2k\leq m-2\qquad  \text{если}\,\, m\,\, \text{--- четное}.
$$
 При $0<\alpha\leq\frac{1}{2}$ решение уравнения \eqref{rad} при условии
 $$
 \lim\limits_{r\rightarrow 0+} f(r)=b_0
 $$
 имеет вид
 \begin{equation}\label{Sol9}
 f(r)=\frac{2}{\sqrt{\pi}}b_{0}\,
 \,_2\Psi_2\left[\left.
 \begin{array}{c}
 $$\left(1+\frac{1}{2},\alpha\right), (1,1)$$ \\
 $$\left(1,\alpha\right), \left(2,2\alpha\right)$$ \\
 \end{array}
 \right|\lambda r^{2 \alpha}\right].
 \end{equation}
Решение для $\lambda=2$, $b_0=1$, $\alpha=0,1;0,3;0,5$ представлены на рисунке 2
	\begin{figure}[h!]
	\center{\includegraphics[width=1\linewidth]{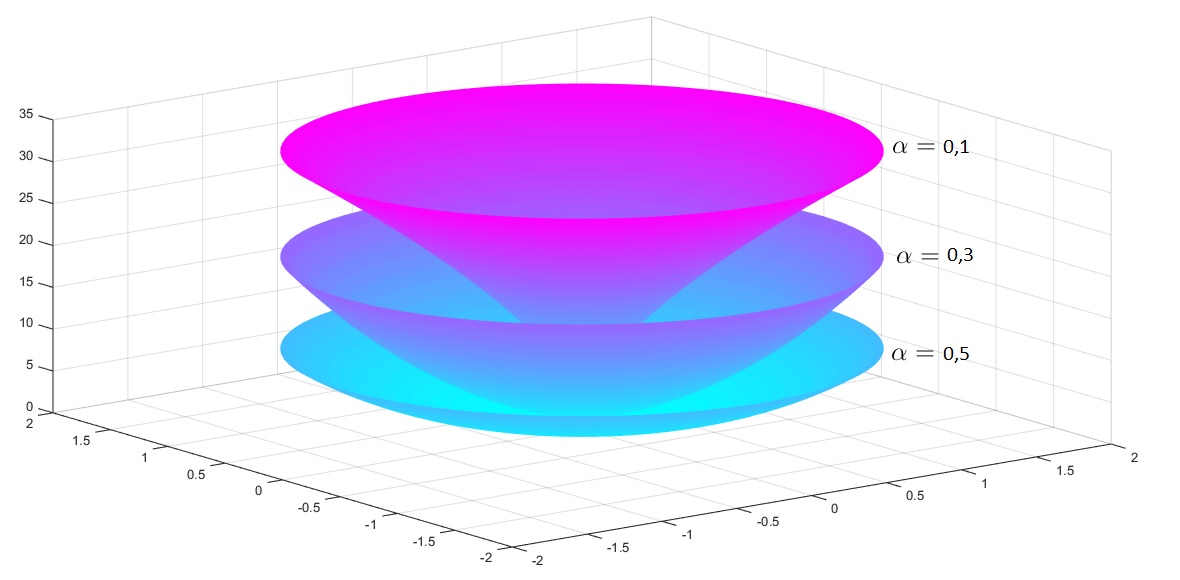} }
	\caption{График решения \eqref{Sol9} при  $\lambda=2$, $b_0=1$, $\alpha=0,1;0,3;0,5$.}
\end{figure}

\end{exampl}

\section{Заключение}

В данной статье предлагается новый подход для решения обыкновенных дифференциальных уравнений с левосторонней дробной производной Бесселя на полуоси типа Герасимова--Капуто на основе метод интегрального преобразования Мейера. Также приведено несколько иллюстративных примеров.

\end{document}